\documentclass[12pt, reqno]{amsart}
\usepackage{amsmath, amsthm, amscd, amsfonts, amssymb, graphicx, xcolor}
\usepackage[bookmarksnumbered, colorlinks, plainpages]{hyperref}
\usepackage{amsmath}
\usepackage{cases}
\usepackage{amssymb}
\usepackage{mathrsfs}
\usepackage{euscript}
\usepackage{dsfont}
\usepackage{graphicx}
\usepackage{float}
\usepackage{graphicx}
\usepackage{amssymb}

\allowdisplaybreaks[4]
\newtheorem{theorem}{Theorem}[section]

\newtheorem{lemma}{Lemma}[section]
\newtheorem{assumption}[lemma]{Assumption}

\newtheorem{corollary}[theorem]{Corollary}
\theoremstyle{definition}

\theoremstyle{remark}

\numberwithin{equation}{section}

\begin{document}
	\setcounter{page}{1}
	
	%\color{darkgray}{
		%\noindent 
		%{\small Annals of Mathematics and Computer Science}\hfill     {\small ISSN: 2789-7206}\\
		%{\small Vol 20 (2024) 1-3}\hfill  {\small https://doi.org/10.56947/amcs.v20.223}}

	%------------------------------------------------------------------------------
	
	%Title of the paper
	\title[]{Convolution-type Bombieri-Vinogradov theorem with well-factorable Weights, and its applications}
	
	\author[]{Zhiyuan Yang}
	
	\address{School of Mathematics,  Shandong University, Jinan 250100, Shandong, China}
	\email{\tt zhiyuan.yang@mail.sdu.edu.cn}

	%Author names and affiliations
	%\author[F. Author, S. Author]{First Author$^1$ and Second Author$^2$$^{*}$}
	
	%\address{$^{1}$ Department of Mathematics, University of California, San Diego, USA.}
	%\email{\textcolor[rgb]{0.00,0.00,0.84}{first@amcs.org}}
	
	%\address{$^{2}$ Department of Computer Science, Moscow State University, Moscow, Russia}
	%\email{\textcolor[rgb]{0.00,0.00,0.84}{second@ieee.org}}

	%\dedicatory{This paper is dedicated to Professor ABCD}
	
	%\date{Received: xxxxxx; Revised: yyyyyy; Accepted: zzzzzz.
		%	\newline \indent $^{*}$ Corresponding author
		%	\newline \indent © The Author(s) 2025. This article is licensed under a Creative Commons Attribution-
		%	\newline \indent NonCommercial-NoDerivatives 4.0
		%	International License. To view a copy of the licence, v%isit 
		%	\newline \indent \url{https://creativecommons.org/licenses/by-nc-nd/4.0/}}
	
	\begin{abstract}
		In this paper, we consider the asymptotic density of $\#\{p\leq x:P^+(p-1)\geq p^c\}$ and $\#\{n\leq x:P^+(n)<P^+(n+1)\}$, where $P^+(n)$ denote the largest prime factor of $n$. We show that for $x\rightarrow\infty$, one has
		\begin{align*}
			\mathop{\lim\sup}_{x\rightarrow\infty} \frac{1}{\pi(x)}\#\{p\leq x:P^+(p-1)\geq p^c\}\leq\frac{16}{5}\log\frac{1}{c}
		\end{align*}
		where $e^{-5/16}<c<1$, and
		\begin{align*}
			\#\{n\leq x:P^+(n)<P^+(n+1)\}>0.299x.
		\end{align*} The first result constitutes an improvement upon that of Ding and Wang (2025), who obatined $\mathop{\lim\sup}_{x\rightarrow\infty} \frac{1}{\pi(x)}\#\{p\leq x:P^+(p-1)\geq p^c\}\leq \frac{7}{2}\log\frac{1}{c}$.
		The second result improves a previous result $0.280$ by the author (2026).
		
		The key to the proof is that for a special class of convolution forms equipped with well‑factorable weights, we may use the level \(x^{5/8-o(1)}\) for Pascadi’s prime‑distribution result with triple-well-factorable weights. We also use Pascadi's estimation of incomplete Kloosterman sums.
		\newline
		\newline
		\noindent \textit{Keywords.} primes in arithmetic progressions, largest prime factor.
		\newline
		\noindent %\textit{2020 Mathematics Subject Classification.} Primary 46L55; Secondary 44B20
	\end{abstract} \maketitle
	
	%Abstract, keywords, math subject classification
	
	\section{Introduction}
	Let $P^+(n)$ and $P^-(n)$ denote the largest and smallest prime factors of $n$ respectively  with the convention $P^+(1)=1$ and $P^-(1)=\infty$.
	\subsection{The upper bound of shifted primes with large prime factors} Define
	\begin{align*}
		T_c(x):=\#\{p\leq x:P^+(p-1)\geq p^c\},
	\end{align*}
	where $1/2<c<1$.
	We are interested in the upper bound of $T_c(x)$.
	We show the previous results as following.
	\begin{align*}
		\text{Erd\H{o}s (1935) [7]}& &&\mathop{\lim\sup}_{x\rightarrow \infty}\frac{T_c(x)}{\pi(x)}\rightarrow0, \quad \text{as}\ c\rightarrow 1\\\text{Ding (2023) [4]}& &&\mathop{\lim\sup}_{x\rightarrow \infty}\frac{T_c(x)}{\pi(x)}\leq\frac{1-2\delta}{2c}, \quad \text{for some}\ \delta>0,\ \text{for}\ 3/4<c<1\\\text{Ding (2025) [5]}& &&\mathop{\lim\sup}_{x\rightarrow \infty}\frac{T_c(x)}{\pi(x)}\leq8(c^{-1}-1),\quad \text{for}\ 8/9<c<1\\\text{Ding and Wang (2025) [6]}& &&\mathop{\lim\sup}_{x\rightarrow \infty}\frac{T_c(x)}{\pi(x)}\leq\frac{7}{2}\log\frac{1}{c},\quad \text{for}\ e^{-2/7}<c<1\\\text{Yang (author, 2026) [32]}& &&\mathop{\lim\sup}_{x\rightarrow \infty}\frac{T_c(x)}{\pi(x)}\leq\frac{1-\delta}{2c}, \quad \text{for some}\ \delta>0,\ \text{for}\ 1/2<c<1
	\end{align*}
	The two \(\delta\) above are derived by different methods and differ from each other. In this paper, we improve Ding and Wang's result.
	\begin{theorem}
		For any $e^{-5/16}<c<1$, we have
		\begin{align*}
			\mathop{\lim\sup}_{x\rightarrow\infty}T_c(x)/\pi(x)&\leq\frac{16}{5}\log\frac{1}{c}.
		\end{align*}
	\end{theorem}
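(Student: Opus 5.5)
Our plan follows the Ding--Wang strategy, with the Bombieri--Vinogradov level $1/2$ (or $4/7$) replaced by the convolution-type level $x^{5/8-o(1)}$ with well-factorable weights announced in the abstract.

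\textbf{Reduction.} Write $p-1=mq$ with $q=P^+(p-1)\ge p^c$. Since $c>1/2$, the prime $q$ is unique and the cofactor $m=(p-1)/q\le x^{1-c}$ is small. Dyadically localizing $p\sim x$ (the range $p\le x/\log x$ is negligible), we get
\[
T_c(x)\le \sum_{m\le x^{1-c}}\#\{q \text{ prime}: mq+1 \text{ prime},\ mq+1\le x\}+o(\pi(x)).
\]
We then apply an upper-bound linear sieve, using the beta-sieve weights $\lambda_d^+$, which are well-factorable in Iwaniec's sense. We sieve the sequence $\{mq+1\}$ by primes $d$ up to $D=x^{\theta}$, with $q$ a prime variable and $m$ ranging over a short interval. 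Counting primes $p\le x$ with $p\equiv 1 \pmod m$ and $(p-1)/m$ prime is the same as sieving the integers $n=(p-1)/m$ along the primes $p\equiv1\pmod m$. For each fixed $m$, the sieve main term gives roughly
\[
\frac{\pi(x)}{\varphi(m)}\cdot\frac{2e^{\gamma}}{\log D}\,\mathfrak S(m)\cdot\frac{m}{\varphi(m)}\cdot e^{-\gamma}\cdot\frac{\log D}{\log (x/m)}\cdots .
\]
Concretely, the upper linear sieve with level $D$ gives the factor $F(s)=2e^\gamma/s$ for $s\le 3$. The resulting bound for fixed $m$ is about $\frac{2}{\theta}\cdot\frac{\pi(x)}{m}\cdot\frac{1}{\log x}\cdot(\text{local factors})$. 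Summing the local factors over $m$ against $1/\varphi(m)$ reproduces $\sum_{m\le x^{1-c}}1/m\approx(1-c)\log x$. The result is the bound $\frac{2}{\theta}\cdot\log\frac1c$ once we integrate properly over the ranges: the weight $1/\log(x/m)$ integrated in $m$ gives $\log(1/c)$ rather than $1-c$. With $\theta=5/8$ this yields exactly $\frac{16}{5}\log\frac1c$. Ding--Wang's $7/2$ corresponds to $\theta=4/7$, so the whole game is to justify level $5/8$.

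\textbf{Error term.} The remainder is
\[
\sum_{m\sim M}\sum_{d\le D}\lambda_d^+\Big(\pi(x;md,1)-\frac{\pi(x)}{\varphi(md)}\Big)
\]
with $M\le x^{1-c}$ tiny, since $c$ is close to $1$ in the relevant range. This is a convolution of a smooth (or divisor-free) variable $m$ with well-factorable weights $\lambda_d$, at total modulus $md\le x^{5/8-o(1)}\cdot x^{1-c}$. Moving $m$ into the weights, we must show that $\alpha*\lambda^+$, with $\alpha=\mathbf 1_{[M,2M]}$, is triple-well-factorable. Then Pascadi's equidistribution theorem for primes with triple-well-factorable weights at level $x^{5/8-o(1)}$ applies (the convolution-type Bombieri--Vinogradov theorem of this paper). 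Given $\lambda^+$ well-factorable of level $x^{\theta}$, any factorization $D=D_1D_2$ splits $\lambda^+=\beta_1*\beta_2$. Appending $m$ as a third factor gives a triple factorization $Q_1Q_2Q_3$ with $Q_3=M$. Pascadi's theorem needs flexibility in the constraints on $Q_1,Q_2,Q_3$, which we meet by choosing $D_1,D_2$ appropriately relative to $M$. The parts of Pascadi's argument not covered by triple factorization are handled using his incomplete Kloosterman sum estimates.

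\textbf{Main obstacle.} The main obstacle is exactly this error-term step. Pascadi's $5/8$ requires genuinely triple-well-factorable weights, whereas the beta-sieve weights are only well-factorable. The extra smooth factor $m$ must supply the third factorization, which only works in certain ranges of $M$. We expect to need a case split on the size of $M$ relative to $x^{\theta}$. In the extreme ranges (very small $M$) we would instead redo Pascadi's dispersion argument, using the two-fold factorization plus the $m$-sum, and bound the resulting incomplete Kloosterman sums directly with his estimate. Finally, the condition $c>e^{-5/16}$ is just the range in which the bound is nontrivial ($<1$), so we make no further restriction there.
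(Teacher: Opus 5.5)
There is a genuine gap in the error-term step, and it is more basic than a missing range of $M$: the remainder you write down belongs to a different sieve from the one that produces your main term. The constant $\frac{2}{\theta}\log\frac1c$ comes from sifting $mq+1$ (with $q$ the large prime) by $d\le D=x^{\theta}$. For fixed $m$ the count is $\pi(x/m)$ times $(2+o(1))/\log D$ on average over $m$, and it is the factor $1/\log(x/m)$ from $\pi(x/m)$ that integrates to $\log(1/c)$. The remainder of that sieve is
\[
\sum_{d}\lambda_d^+\sum_{m}\sum_{q\le x/m}\Big(\mathbf 1_{mq\equiv -1\ (\mathrm{mod}\ d)}-\frac{\mathbf 1_{(mq,d)=1}}{\varphi(d)}\Big).
\]
This is a convolution sequence $mq$ in progressions to the modulus $d$ alone, with only well-factorable weights. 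Here $m$ is not in the modulus, so there is nothing to ``move into the weights''.

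Your expression $\sum_{m,d}\lambda_d^+(\pi(x;md,1)-\pi(x)/\varphi(md))$ is instead the remainder for sifting $(p-1)/m$ along primes $p\equiv 1\pmod m$. For that sieve you would need moduli $md$ up to $x^{5/8}\cdot x^{1-c}$. This is beyond Pascadi's exponent $5/8$ even for genuinely triply-well-factorable weights, and $M$ is not tiny, since $1-c$ can be as large as $1-e^{-5/16}\approx 0.27$. If you cap $md\le x^{5/8}$, you are forced to take $D_m=x^{5/8}/m$, and the main term becomes $\sum_m\frac{\pi(x)}{\varphi(m)}\frac{2}{\log D_m}\approx 2\log\frac{5/8}{c-3/8}\,\pi(x)$. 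That is strictly larger than $\frac{16}{5}\log\frac1c\,\pi(x)$: about $1.12$ versus $1$ at $c=e^{-5/16}$. So this plan cannot reach the theorem. The absorption $\mathbf 1_{[M,2M]}*\lambda_3$ you have in mind is what the paper does in Section 4, where the modulus genuinely is $md'$ and the main terms are correspondingly of the form $\int dt/(5/8-t)$.

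What is missing is the device the paper uses to get level $x^{5/8-\varepsilon}$ for the convolution above with plain well-factorable $\lambda^+$: a short Siegel--Walfisz variable found inside the sequence.

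\begin{itemize}
\item With $y=x^{(1-c)\varepsilon/(3J)}$, almost every cofactor $\ell$ (your $m$) has a prime factor $p_1\in(y^\delta,y]$. The truncated inclusion--exclusion (3.2), with $(\log(1/\delta))^{J+1}/(J+1)!\le\delta$, writes $\ell q=p_1\cdot e\cdot q$.
\item Pascadi's Proposition 4.4 of [19] (Lemma 3.2) then applies with $\alpha_n=\mathbf 1_{n\text{ prime}}$ for $n\asymp P_1$, a $1$-bounded $\beta$ supported on $eq$, $Q_1=1$, $Q_2\le x^{1/4-\varepsilon}$ and $Q_3\le x^{3/8}$.
\item With $N$ a small fixed power of $x$, the constraints $N^2Q_2Q_3^2\le x^{1-15\varepsilon}$ and $N^2Q_2^5Q_3^2\le x^{2-40\varepsilon}$ allow exactly $Q_2Q_3\approx x^{5/8}$. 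A well-factorable $\lambda^+$ of level $x^{5/8-\varepsilon}$ splits as $\lambda_2*\lambda_3$ accordingly.
\item The $(J+1)$-fold tail costs $O(\delta)\pi(x)$.
\item The exceptional $\ell$ with $(\ell,P(y^\delta,y))=1$ are sieved only at level $x^{1/2}/(\log x)^B$ via Lemma 2.7. They contribute $O(\delta)\pi(x)$ because such $\ell$ have logarithmic density $O(\delta)$.
\end{itemize}

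Letting $\delta,\varepsilon\to 0$ gives $\frac{16}{5}\log\frac1c$. No case split on $M$ and no fresh Kloosterman-sum analysis is needed for this theorem.
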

	Note that $16/5<7/2$. The numerical value $e^{-5/16}= 0.73161\cdots$ should be compared with Ding and Wang's result $e^{-2/7}=0.75147\cdots$.

	\begin{corollary}
		For any $ e^{-5/32}<c<1$, we have $\mathop{\lim\sup}_{x\rightarrow\infty}T_c(x)/\pi(x)<1/2$.
	\end{corollary}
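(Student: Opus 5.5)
The corollary follows directly from the preceding Theorem, so the plan is a short computation rather than new analytic input. Fix $c$ with $e^{-5/32}<c<1$. Since $e^{-5/32}>e^{-5/16}$, we have $e^{-5/16}<c<1$, and the Theorem applies. It gives
\begin{align*}
\mathop{\lim\sup}_{x\rightarrow\infty}T_c(x)/\pi(x)\leq\frac{16}{5}\log\frac{1}{c}.
\end{align*}

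It then remains to check that the right-hand side is strictly less than $1/2$. The function $c\mapsto \log(1/c)$ is strictly decreasing on $(0,1)$. Hence $c>e^{-5/32}$ implies $\log(1/c)<5/32$. Multiplying by $16/5$ gives
\begin{align*}
\frac{16}{5}\log\frac{1}{c}<\frac{16}{5}\cdot\frac{5}{32}=\frac{1}{2}.
\end{align*}
Combining this with the display above yields the strict inequality $\lim\sup_{x\to\infty} T_c(x)/\pi(x)<1/2$, as claimed.

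The only point needing care is that strictness is preserved. The Theorem gives a non-strict bound on the $\lim\sup$, and the strict inequality comes entirely from $c$ lying strictly above the threshold $e^{-5/32}$. At the endpoint $c=e^{-5/32}$ the argument only gives $\le 1/2$, which is why the corollary is stated for the open range.

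There is no real obstacle here: all the depth lies in the Theorem itself. The threshold $e^{-5/32}\approx 0.8553$ is simply the value of $c$ at which the bound $\frac{16}{5}\log\frac1c$ equals $1/2$.
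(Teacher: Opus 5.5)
Your argument is correct and is exactly how the corollary follows in the paper. Since $e^{-5/32}>e^{-5/16}$, Theorem 1.1 applies, and $c>e^{-5/32}$ gives $\frac{16}{5}\log\frac{1}{c}<\frac{16}{5}\cdot\frac{5}{32}=\frac{1}{2}$; the paper states the corollary without a separate proof because it is this immediate consequence.
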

	The numerical value of $e^{-5/32}= 0.85534\cdots$ should be compared with Ding and Wang's result $e^{-1/7}=0.86687\cdots$. (see [6, Corollary 1.1])
	
	Under the assumption of Elliott-Halberstam conjecture, we have
	\begin{align*}
		\lim_{x\rightarrow \infty}T_c(x)/\pi(x)=\log\frac{1}{c},
	\end{align*}
	where $1/2<c<1$. 
	Our unconditionally bound in Theorem 1.1 is $3.2$ times that of the expected asymptotic
	density.
	
	\subsection{Erd\H{o}s-Turán conjecture} One of Erd\H{o}s and Turán's
	conjectures (see [22]) asserts that the asymptotic density of integers $n$ satisfying $P^+(n)<P^+(n+1)$ is 1/2. 
	In 1978, Erd\H{o}s and Pomerance [9] first proved that there exists a positive asymptotic density of integers $n$ such that $P^+(n)<P^+(n+1)$. In fact, they showed that\begin{align*}
		\#\{n\leq x:P^+(n)<P^+(n+1)\}>0.0099x.
	\end{align*}
	In 2005, the asymptotic density was improved to 0.05544 by La Bretèche, Pomerance and Tenenbaum [3], and to 0.05866 by Fouvry's arguments in ``Further remarks" of the same paper. Later, the constant 0.05866 was improved successively to 0.1063 and 0.1356 by Wang in [28, 29]. In 2025, Lü and Wang [16] improved this constant to 0.2017. Very recently, the author [32, Theorem 1.4] showed that the asymptotic density is larger than 0.280. In this paper, we improve this constant to 0.299. 
	\begin{theorem}
		For $x\rightarrow\infty$, we have
		\begin{align*}
			\#\{n<x:P^+(n)<P^+(n+1)\}>0.299x.
		\end{align*}
		The lower bound is also true for the pattern $P^+(n)>P^+(n+1)$.
	\end{theorem}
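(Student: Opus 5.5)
We follow the framework of Erd\H{o}s--Pomerance and its refinements (La Bretèche--Pomerance--Tenenbaum, Wang, Lü--Wang, and [32]). The central quantity is the density of integers $n\le x$ with $P^+(n)=p$ and $n=mp$ for a prime $p\ge x^{\theta}$. The basic observation is this. If $n$ has a large prime factor $p>n^{\theta}$ and $n+1$ has no prime factor exceeding $p$ (or the reverse), the pattern is decided. So we bound from below the number of $n\le x$ of the form $n=mp$ with $p\ge x^{\theta}$ and $P^+(n+1)>p$. Equivalently, we bound from above the number of such $n$ for which $n+1$ has a prime factor $q>p$ yet is still ``large-prime-free'' in the complementary sense. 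By symmetry ($n\mapsto n+1$ reverses the roles) the same constant will then hold for $P^+(n)>P^+(n+1)$.

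\textbf{Step 1: decomposition.} Write
\[
\#\{n\le x:P^+(n)<P^+(n+1)\}\ \ge\ \sum_{\theta}\Big(\#\{n=mp\le x:\ p=P^+(n)\in I_\theta\}-\#\{n=mp:\ P^+(n+1)\le p\}\Big),
\]
plus the contribution of the reversed configuration $n+1=mp$ with $P^+(n)<p$. The first term is computed directly by Dickman-type asymptotics, giving densities like $\log(1/c)$ and $\rho$-integrals.

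\textbf{Step 2: upper bound for the bad term.} The bad term counts $p\in I_\theta$ and $m$ with $mp+1$ being $p$-smooth, i.e.\ $P^+(mp+1)\le p$. Swapping roles, this is bounded by the count of shifted primes $p\equiv -\bar m \pmod{\cdot}$ with $(mp+1)$ lacking large primes. We bound it via an upper-bound sieve (Selberg/linear sieve) for primes $p$ in progressions modulo $d\mid mp+1$. This is exactly where the level of distribution enters. We use the main new input of the paper: the convolution-type Bombieri--Vinogradov theorem with well-factorable weights at level $x^{5/8-o(1)}$, for sequences of the form $\sum_{mp=n}\alpha_m 1_{p}$ weighted by well-factorable sieve weights $\lambda_d$, combined with Pascadi's triple-well-factorable result and his incomplete Kloosterman sum bound. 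This is the same machinery that yields Theorem~1.1, where the level $5/8$ replaces Ding--Wang's $4/7$, giving the constant $16/5=2/(5/8)$ instead of $7/2=2/(4/7)$. The bad term therefore gets an upper bound of the shape $\frac{1}{\vartheta}\cdot(\text{main})$ with $\vartheta=5/8$, with explicit $\rho$-integral corrections.

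\textbf{Step 3: optimization and numerics.} Insert these bounds into the same integral inequality as [32, Theorem~1.4], now with level $5/8$ in place of the previous level, and optimize over the splitting parameter $\theta$ (and the ranges where the trivial bound or Theorem~1.1 is better). The resulting explicit integrals are then evaluated numerically to exceed $0.299$.

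\textbf{Main obstacle.} We expect the main obstacle to be verifying that the sequences arising in Step~2 truly have the convolution form with well-factorable weights needed for the $5/8$ level, uniformly in the range of $m$. In particular, the condition $P^+(mp+1)\le p$ must be expressed through sieve weights that factor appropriately. If the level falls short for some ranges of $m$, we would fall back to Bombieri--Vinogradov level $1/2$ there and recheck the numerics.
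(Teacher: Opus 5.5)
Your plan matches the paper only at the coarsest level: rerun the argument of [32] with a higher level in the sieve upper bounds, then redo the numerics. It breaks down exactly where the paper does its new work.

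In bounding the bad term $S$ of (4.1), the sieve is applied to the cofactor $(dp-1)/m$. So the moduli are $md'$, with $m\sim\mathcal{M}$ as large as $x^{\eta_1}$, $\eta_1<1/2$. Pascadi's triple-well-factorable estimate (Lemma 3.2) gives total level $x^{5/8}$ only if $m$ can be absorbed into the factor $q_3\le x^{3/8}$, i.e. $\mathcal{M}\le x^{3/8}$; this is Case 1. Since $m$ must sit inside one of $q_1,q_2,q_3$, for $\mathcal{M}>x^{3/8}$ the constraints $N^2Q_2Q_3^2\le x^{1-15\varepsilon}$ and $N^2Q_2^5Q_3^2\le x^{2-40\varepsilon}$ allow total level only about $x/\mathcal{M}<x^{5/8}$. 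So for $3/8<\eta_1<1/2$ you cannot ``use $5/8$ in place of the previous level''. Your fallback to level $1/2$ gives up precisely this range.

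The paper instead proves a level $x^{1/4+\eta-\varepsilon}$ there, which is \emph{better} than $5/8$ (Case 2 of Section 4, the level theorem with $\mathcal{L}(\nu)=1/4+\nu$). It exploits that $d\sim x^{\eta}$ runs over all integers, and uses:
\begin{itemize}
\item truncated Poisson summation in $d$ (Lemma 2.2);
\item reciprocity $\overline{n}/(qr)\to\overline{qr}/n$ and Cauchy--Schwarz;
\item Pascadi's bound for incomplete Kloosterman sums through the exceptional large sieve (Lemmas 2.4, 2.5, 4.1), under the conditions (4.12).
\end{itemize}
That level defines $g_1$ for $\eta_1>3/8$, hence $t_1(\eta)$ and the $t_1$-integral over $(c,1/2)$ in $\mathcal{C}(c,\delta_1)$. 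Without it, nothing in your plan shows that the final constant exceeds $0.299$, as your own ``recheck the numerics'' concedes. You do cite the Kloosterman bound, but in the paper it is not what produces $5/8$; Lemma 3.2 is used as a black box for that. The Kloosterman bound is used exactly for this Case 2 level.

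The $5/8$ input is also not the clean convolution Bombieri--Vinogradov theorem for $\sum_{mp=n}\alpha_m1_p$ that you invoke. Lemma 3.2 needs a Siegel--Walfisz factor $\alpha_n$ with $P^-(n)\ge z_0$ and $N^2Q_2Q_3^2\le x^{1-15\varepsilon}$. So at level $x^{5/8-\varepsilon}$ that factor has size $x^{O(\varepsilon)}$, and the large prime $p$ cannot play its role. The paper proceeds as follows:
\begin{itemize}
\item it extracts a prime $p_1\in(y^\delta,y)$, $y=x^{O(\varepsilon)}$, from the unweighted variable ($\ell$ or $d$) by the truncated inclusion--exclusion (3.2);
\item it applies Lemma 3.2 with $\alpha_n=1_{n\ \text{prime}}1_{n\in\mathscr{P}_1}$;
\item it treats the variables with no prime factor in $(y^\delta,y)$ only at level $1/2$ (Lemma 2.7), showing by Dickman-function estimates that their main-term contribution is $O(\delta)$.
\end{itemize}

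Finally, your Step 2 proposes to sieve the condition $P^+(mp+1)\le p$, which is not a sieve condition. The paper sieves for primality of the cofactor $(dp-1)/m$ with Rosser--Iwaniec weights $\lambda^+_{d'}$. How the fixed factor $m$ is merged with the well-factorable $\lambda_{d'}$ decides which level is reachable: as $(\nu*\lambda_3)*\lambda_2$ in Case 1, and as $(\nu*\lambda_1)*\lambda_2$ in Case 2.
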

	On the other hand, this problem has generated a long sequence of unconditional density advances, together with logarithmic-density, averaged-shift, and conditional forms [13, 21, 23, 26, 30].
	
	\subsection{The tools and ideas}
	Define $\pi(x) := \#\{\text{prime}\ p \leq x\}$ and $\pi(x, q; a) := \#\{\text{prime}\ p\leq x: p\equiv a\mkern-7mu \pmod{q}\}$. The
	celebrated Bombieri-Vinogradov theorem [1, 27] claim that for $B = B(A)$ large enough in terms of $A$, one has
	\begin{align*}
		\sum_{\substack{q\leq x^{1/2}(\log x)^{-B}\\(q,a)=1}}\Bigg|\pi(x,q,a)-\frac{\pi(x)}{\varphi(q)}\Bigg|\ll_A\frac{x}{(\log x)^A}.
	\end{align*}
	A complex sequence $(\lambda_q)_{q\leq Q}$ is said to be
	well-factorable of level $Q$ iff for any $Q_1$, $Q_2\geq 1$ with $Q_1Q_2=Q$, there exist 1-bounded
	complex sequences $(\alpha_{q_1})$, $(\beta_{q_2})$ supported on $q_i \leq Q_i$, such that for all $q$,
	\begin{align*}
		\lambda_q=\sum_{q_1q_2=q}\alpha_{q_1}\beta_{q_2}.
	\end{align*}
	This weight arises in the Rosser-Iwaniec sieve [12] and has found numerous applications. In 1986, Bombieri, Friedlander and Iwaniec [2] famously obtained 
	\begin{align*}
		\sum_{\substack{q\leq Q\\(a,q)=1}}\lambda_q\Bigg(\sum_{\substack{p<x\\p\equiv a\mkern-15mu\pmod{q}}}1-\frac{1}{\varphi(q)}\sum_{\substack{p<x\\(p,q)=1}}1\Bigg)\ll\frac{x}{(\log x)^A},
	\end{align*}
	where $\lambda_q$ is a well factorable function of level $Q=x^{4/7-\varepsilon}$. 
	
	In our works, we encountered the following type of estimation (This is the key!)
	\begin{align*}
		\sum_{d}\sum_{q}\gamma_d\lambda_q\Bigg(\sum_{\substack{l\sim L}}\sum_{\substack{p<x/l\\lp\equiv a\mkern-15mu\pmod{dq}}}1-\frac{1}{\varphi(dq)}\sum_{\substack{l\sim L}}\sum_{\substack{p<x/l\\(lp,dq)=1}}1\Bigg)\ll_{\varepsilon,A,a}\frac{x}{(\log x)^A},\tag{1.1}
	\end{align*}
	where $L=x^{\nu}$, $(\gamma_d)$ and $(\lambda_q)$ be divisor-bounded sequences supported on positive integers with
	\begin{align*}
		d\sim D=x^{\theta},\quad q\leq Q= x^{\mathcal{L}(\theta,\nu)-\varepsilon},\quad (a,dq)=1,
	\end{align*}
	$\lambda_q$ is a well-factorable function of level $x^{\mathcal{L}(\theta,\nu)-\varepsilon}$ (from the Rosser-Iwaniec sieve).
	In particaular, for $\theta=0$, the above becomes
	\begin{align*}
		\sum_{q}\lambda_q\Bigg(\sum_{\substack{l\sim L}}\sum_{\substack{p<x/l\\lp\equiv a\mkern-15mu\pmod{q}}}1-\frac{1}{\varphi(dq)}\sum_{\substack{l\sim L}}\sum_{\substack{p<x/l\\(lp,q)=1}}1\Bigg)\ll_{\varepsilon,A,a}\frac{x}{(\log x)^A}.\tag{1.2}
	\end{align*}
	In this paper, we mainly focus on the estimation of these two types. The proofs in previous literature mainly employ the following results.
	
	In 1975, Pan, Ding and Wang [18, Theorem 2] showed that for any given positive constant $A>0$, there exists a constant $B=B(A)>0$ such that the estimate\begin{align*}
		\sum_{q\leq x^{1/2}/(\log x)^B}\max_{y\leq x}\max_{(a,q)=1}\Big|\sum_{\substack{L_1<l\leq L_2\\(l,q)=1}}f(l)\Big(\sum_{\substack{lp\leq y\\lp\equiv a\mkern-15mu\pmod{q}}}1-\frac{\mathrm{li}(y/l)}{\varphi(q)}\Big)\Big|\ll_{\varepsilon,A}\frac{x}{(\log x)^A}
	\end{align*}
	holds for $(\log y)^{2B}<L_1\leq L_2<x^{1-\varepsilon}$, where $|f(l)|\leq 1$.
	Their results were later generalized to more general forms. For example, we can see [10, Theorem 9.16].
	Define
	\begin{align*}
		\pi(x;l,a,q):=\sum_{\substack{lp\leq x\\lp\equiv a\mkern-15mu\pmod{q}}}1.
	\end{align*}
	In 2018, Wang [29, Proposition 3.2] generalized Bombieri, Friedlander and Iwaniec's result to show that for any well factorable function $\lambda(q)$ of level $Q$, the following estimate
	\begin{align*}
		\sum_{\substack{q\leq Q\\(a,q)=1}}\lambda(q)\sum_{\substack{L_1\leq l\leq L_2\\(l,q)=1}}\left(\pi(x;l,a,q)-\frac{\pi(x/l)}{\varphi(q)}\right)\ll_{a,A,\varepsilon}\frac{x}{(\log x)^A}
	\end{align*}
	holds for $Q\leq x^{4/7-\varepsilon}$ and $1\leq L_1\leq L_2\leq x^{1-\varepsilon}$.
	Furthermore, there exist many results of a similar convolution type, though they are not written out explicitly in a simple form.
	
	Now let $L=x^{\nu}$ with $0<\nu<3/8$, and consider the expression in (1.2). In our proofs, we observe that almost all \(l \sim L\) have a prime factor $p\in(y^\delta,y)$, where $y\approx x^\varepsilon$, $\delta\rightarrow 0^+$. This is easily verified using the prime number theorem by iteration. (see (3.2) below) Moreover, the indicator function of primes satisfies the so-called Siegel-Walfisz condition. Then Pascadi's estimate for the triply-well-factorable convolution type can be employed. (see Lemma 3.2 below) The triple-well-factorable weights of level $x^{5/8-\varepsilon}$ can be reduced to a well-factorable form, which yields the required result.
	
	Furthermore, when \(\nu>3/8\), we get a better level compared with $x^{5/8-\varepsilon}$. Following Bombieri-Friedlander-Iwaniec's works [2], the key to the proof is the use of Pascadi's new estimate of incomplete Kloosterman sums and a refined version of the Poisson summation formula [see Lemmas 2.2-2.5 below]. In fact, we obtain
	\begin{theorem}
		Let $L=x^{\nu}$. Then for any well-factorable function $\lambda_q$ of level $x^{\mathcal{L}(\nu)-\varepsilon}$, (1.2) holds for
		\begin{align*}
			\frac{3}{8}\leq\nu\leq\frac{1}{2}&:&&\mathcal{L}(\nu)= \frac{1}{4}+\nu,\\\frac{1}{2}\leq\nu\leq1&:&&\mathcal{L}(\nu)= \frac{1}{2}+\frac{\nu}{2}.
		\end{align*}
	\end{theorem}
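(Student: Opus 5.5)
We follow the Bombieri--Friedlander--Iwaniec dispersion strategy from [2], now with the smooth variable $l\sim L$ playing the role of a long "type I/II" variable. Write the difference in (1.2) as $\sum_q\lambda_q\,\Delta(q)$. Using well-factorability, for any factorization $Q=RS$ we write $\lambda_q=\sum_{rs=q}\alpha_r\beta_s$ with $r\le R$, $s\le S$. We then decompose the prime indicator in $p<x/l$ by Heath-Brown's identity (or Vaughan's identity) into convolutions. The sum over $lp$ becomes a sum over $n=lm_1\cdots m_k$. Pieces with a smooth long factor of length $\ge x^{1/2+\varepsilon}$ (relative to $x/l$) are handled by Poisson summation directly. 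The remaining bilinear pieces are grouped as $\sum_{m\sim M}\sum_{n\sim N}a_m b_n$ with $MN\asymp x$, and we always absorb $l$ into one of the two variables. The key point is that the variable containing $l$ has size at least $L=x^\nu$. So we get bilinear forms whose "long" side can be taken in a range depending on $\nu$. Both sequences satisfy the Siegel--Walfisz condition, because $p$ is prime and $l$ runs over a full interval.

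For each bilinear form we apply Cauchy--Schwarz over $(m,r)$ after writing $q=rs$, and then open the square. This gives the dispersion sum
\[
\sum_{r}\sum_{m}\Big|\sum_{s}\beta_s\sum_{n}b_n\big(\mathbf 1_{mn\equiv a\,(rs)}-\tfrac{1}{\varphi(rs)}\mathbf 1_{(mn,rs)=1}\big)\Big|^2 .
\]
We expand it, complete the $m$-sum by the refined Poisson summation of Lemmas 2.2--2.5, and separate out the zero frequency, which cancels against the main terms. The nonzero frequencies produce incomplete Kloosterman sums in the variables $s_1,s_2,n_1,n_2$ modulo $rs_1s_2$ or similar. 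We bound these with Pascadi's estimate (Lemma 2.x) for incomplete Kloosterman sums averaged over the moduli. This gives a saving provided the parameters satisfy inequalities of the shape
\[
Q^2 N\ll x^{2-\varepsilon}R^{-?},\qquad N\ge x^{\varepsilon}Q/R, \ \dots
\]
In practice these are the BFI conditions $R\le N/x^\varepsilon$, $N^2 S\cdots$, together with an improved constraint coming from Pascadi's bound.

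The bulk of the proof is the optimisation of $R,S$ against $N$. For a given admissible $N$ we must choose $R$ and $S$, with $RS=Q$, so that all constraints hold. For $\nu\ge 1/2$ we take $N\asymp L$ (or its complement $x/L$) directly. The constraint $Q\le x^{1/2}N^{1/2}$ then gives $\mathcal L(\nu)=\tfrac12+\tfrac\nu2$, via the analogue of [2]'s condition $Q^2\le xN$. For $3/8\le\nu\le1/2$, the worst range is instead $N$ close to $x^{1/2}$. Here we also use that any Heath-Brown piece whose factors have sizes between $x^{1/2-\nu}$ and $x^{1/2}$ can be merged with $l$. This leads to the level $Q=x^{1/4+\nu}$, and at $\nu=3/8$ it matches Lemma 3.2's level $x^{5/8}$.

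I expect the main obstacle to be this last step: checking that every configuration produced by the combinatorial decomposition can be grouped into a bilinear form whose parameters satisfy all the Kloosterman constraints simultaneously, uniformly in the ranges of $N$. If some intermediate range fails with Pascadi's bound alone, we will fall back on the triple-well-factorable estimate of Lemma 3.2, using the small prime factor of $l$ described around (3.2) to supply the extra factorization.
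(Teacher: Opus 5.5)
There is a genuine gap, and it is the step you defer as ``the main obstacle''. You never derive the constraints from the Kloosterman-sum bound (they stay as $Q^2N\ll x^{2-\varepsilon}R^{-?}$ and ``$N^2S\cdots$''), and the levels $x^{1/4+\nu}$ and $x^{1/2+\nu/2}$ are asserted rather than obtained.

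More seriously, your route cannot produce these levels. After Heath-Brown's identity you run the Type~II dispersion method: Cauchy--Schwarz over $(m,r)$ first, then Poisson in the outer variable modulo $r[s_1,s_2]\asymp RS^2$. This squares the rough variable, forces $R\lesssim N$ from the diagonal, and leaves dual sums of length about $RS^2/M$.
\begin{itemize}
\item At $\nu=1/2$ you would need such a bilinear estimate at level $x^{3/4}$ with $M\approx N\approx x^{1/2}$. That is far beyond what is available: BFI reach $x^{4/7}$, and even for triply-well-factorable weights Lemma~3.2 stops at $x^{5/8}$.
\item As $\nu\to1$ the claimed level tends to $x$. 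For $\nu=0.9$ the dual length is at least $x^{0.8}$ however you place $l$.
\end{itemize}
Merging Heath-Brown pieces with $l$ only changes the sizes $M,N$. The fact that $l$ carries weight $1$ on a full interval is never used. Your fallback to Lemma~3.2 recovers only the endpoint $\nu=3/8$.

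The missing idea is that (1.2) is a Type~I sum, so no decomposition of the primes is needed. The paper proceeds as follows.
\begin{itemize}
\item Treat $1_{p\ \mathrm{prime}}$ as an arbitrary bounded sequence $\beta_n$ with $n\sim N=x/L$.
\item Write $\lambda=\lambda_1*\lambda_2$ with $q\le Lx^{-\varepsilon}$ and $r\le\min(x^{1/4},(x/L)^{1/2})x^{-\varepsilon}$.
\item Apply the truncated Poisson formula (Lemma~2.2) directly to $l$ modulo $qr$. The zero frequency gives the main term, and the dual frequencies satisfy $|h|\le H\ll x^{\varepsilon}QR/L$.
\item Switch $\overline{n}/(qr)$ to $\overline{qr}/n$ by reciprocity.
\item Apply Cauchy--Schwarz over $(q,n)$ only, so $q$ and $n$ become smooth while $h$ and $r$ stay inside.
\end{itemize}
Lemma~4.1, built from Lemmas~2.4 and~2.5 with $\theta_{\max}=7/64$, then bounds
\[
\sum_{q}\sum_{n}\Bigl|\sum_h\Psi(h/H)\sum_r\delta'_r\,e(\cdots)\,e\bigl(-ah\overline{qr}/n\bigr)\Bigr|^2 .
\]
The resulting conditions are $L>x^{3/8}$, $L^2R^4<x^{2-\varepsilon}$ and $R^4<x^{1-\varepsilon}$. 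These give $QR$ up to $x^{1/4+\nu}$ for $\nu\le 1/2$, and up to $(xL)^{1/2}=x^{1/2+\nu/2}$ for $\nu\ge 1/2$.

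Your heuristic ``$Q^2\le xN$ with $N\asymp L$'' is exactly this Type~I numerology. It holds only when the smooth variable $l$ is Poisson-summed before any Cauchy--Schwarz. The threshold $\nu\ge 3/8$ also comes out of these conditions, not from matching Lemma~3.2.
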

	As a corollary, we can prove the following result. Let $D=x^{\theta}$ and $L=x^{\nu}$, where $0\leq \theta<\nu$, $3/8\leq \nu\leq 1$. Then for any well-factorable function $\lambda_q$ of level $x^{\mathcal{L}(\nu)-\theta-\varepsilon}$, (1.1) holds. A similar conclusion also holds for $0\leq \theta<\nu<3/8$.

	In fact, the weights in (1.1) are similar to the so-called triply-well-factorable weights, which we recall from [17, Definition 2].  For such weights, Maynard [17, Theorem
	1.1] achieved the exponent of distribution $3/5-\varepsilon$ for primes. Later, Lichtman [14, Corollary 1.5] improved the exponent of distribution for triply-well-factorable
	weights to $66/107-\varepsilon$. The best known result is due to Pascadi [20, Theorem 1.3], who improved 
	the exponents to $5/8-\varepsilon$. We may consider \(\gamma_d\) in (1.1) to be either a smooth function or $1_{(D,2D]}$. Combined with the above results with triply-well-factorable weights, this offers some hope for obtaining improvements for Theorem 1.3.

	\section{Lemmas}

	\begin{lemma}
		We have
		\begin{align*}
			\Phi(x,z):=\sum_{\substack{n\leq x\\ P^-(n)>z}}1=\frac{x\omega(v)-z}{\log z}+O\left(\frac{x}{(\log z)^2}\right),
		\end{align*}
		for $x\geq z\geq 2$, where $\nu=\log x/\log z$, $\omega(v)$ is the Buchstab function.
		For $\varepsilon>0$, we have
		\begin{align*}
			\Psi(x,y):=\sum_{\substack{n\leq x\\P^+(n)\leq y}}1=x\rho(u)\left(1+O_\varepsilon\left(\frac{\log(u+1)}{\log y}\right)\right)
		\end{align*}
		uniformly for
		\begin{align*}
			x\geq x_0(\varepsilon),\qquad \exp\{(\log\log x)^{5/3+\varepsilon}\}\leq y\leq x,
		\end{align*}
		where $u=\log x/\log y$ and $\rho(u)$ is the Dickman function.
	\end{lemma}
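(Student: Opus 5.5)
These two estimates are classical, and in the paper they are really quoted: the Buchstab asymptotic is due to Buchstab and de Bruijn, and the smooth-number asymptotic in this range is Hildebrand's theorem. Still, here is how I would prove them. (Note: the parameter written $\nu$ in the statement is the $v=\log x/\log z$ appearing in $\omega(v)$.)

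\emph{Rough numbers.} I argue by induction on $k$ with $v\in[k,k+1]$.
\begin{itemize}
\item For $1\le v<2$, the integers $1<n\le x$ with $P^-(n)>z$ are exactly the primes in $(z,x]$. The prime number theorem with error $O(x/\log^2 x)$ then gives
\begin{align*}
\Phi(x,z)=\frac{x}{\log x}-\frac{z}{\log z}+O\Big(\frac{x}{(\log z)^2}\Big).
\end{align*}
Since $\omega(v)=1/v$ on $[1,2]$, this equals $(x\omega(v)-z)/\log z$ up to the error. The counted $1$ is absorbed into the error.
\item For $v\ge2$, classify $n>1$ by its least prime factor $p>z$. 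This gives Buchstab's identity
\begin{align*}
\Phi(x,z)=1+\sum_{z<p\le x}\Phi(x/p,\,p^{-}),
\end{align*}
where $\Phi(x/p,p^-)$ counts $m\le x/p$ with $P^-(m)\ge p$.
\item Split the sum. For $p>\sqrt x$ only $m=1$ contributes, and those primes number $\pi(x)-\pi(\sqrt x)$. For $z<p\le\sqrt x$, apply the induction hypothesis to $\Phi(x/p,p)$, whose parameter is $\log x/\log p-1<v-1$.
\item Convert the sum over $p$ into an integral by partial summation against the prime number theorem, substituting $p=x^{1/t}$. The main term reproduces the delay-differential equation $(v\omega(v))'=\omega(v-1)$ in integrated form, $v\omega(v)=1+\int_2^{v}\omega(t-1)\,dt$.
\item The accumulated errors are $\sum_p x/(p\log^2 p)\ll x/\log^2 z$, with an implied constant that is uniform because $\omega$ is bounded.
\end{itemize}
The main care needed is to keep the $-z/\log z$ correction consistent through the induction; alternatively I would simply absorb it when $v\ge2$, since then $z\le\sqrt x$ and it is $O(x/\log^2 z)$.

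\emph{Smooth numbers.} I would follow Hildebrand's method.
\begin{itemize}
\item Start from the identity
\begin{align*}
\Psi(x,y)\log x=\int_1^x\Psi(t,y)\frac{dt}{t}+\sum_{p^k\le x,\ p\le y}\Psi(x/p^k,y)\log p,
\end{align*}
obtained by writing $\log n=\sum_{p^k\mid n}\log p$.
\item Insert the prime number theorem with de la Vallée Poussin error. This yields an approximate integral equation that $\Psi(x,y)/x$ satisfies, mirroring $u\rho(u)=\int_{u-1}^u\rho(t)\,dt$.
\item Compare $\Psi(x,y)/x$ with $\rho(u)$ by a continuity/bootstrap argument in $u$, controlling the ratio $\Psi(x,y)/(x\rho(u))$.
\end{itemize}
This final step is the main obstacle. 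Because $\rho(u)$ decays like $u^{-u}$, a naive comparison loses the relative error. Hildebrand's key device is the "local" inequality $\rho(u-t)\le\rho(u)\,e^{O(t\log u)}$ together with an upper bound of Rankin type, and these make the relative error $O(\log(u+1)/\log y)$ propagate stably. The requirement $y\ge\exp((\log\log x)^{5/3+\varepsilon})$ is exactly what the Vinogradov–Korobov zero-free region permits.

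Given the paper's purpose, I would in practice cite Hildebrand (1986) and the standard Buchstab/de Bruijn treatment rather than reproduce these details.
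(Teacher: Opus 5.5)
Your proposal matches the paper, whose proof simply cites Tenenbaum [25, III.6, Theorem 6.4] for the Buchstab estimate and Hildebrand [11, Theorem 1] for the smooth-number estimate, so falling back on citation is exactly what the paper does. If you keep the inductive sketch, note that boundedness of $\omega$ is not what stops the constant from growing over the unboundedly many induction steps; what does is the contraction $\log^2 z\sum_{p>z}\frac{1}{p\log^2 p}\le\kappa<1$, so the induction closes once $C\ge C_0/(1-\kappa)$, and the partial-summation step additionally needs $\omega$ to have bounded variation on $[1,\infty)$ (i.e.\ the decay of $\omega'$) to avoid a non-uniform error of size $x\log x/\log^5 z$.
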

	\begin{proof}
		The first result is [25, Chapter III.6., Theorem 6.4].
		The second result is [11, Theorem 1]. In particular, we have $\omega(\nu)\in[1/2,1]$ and $\log \rho(u)=(-1+o(1))u\log u$.
	\end{proof}
	
	\begin{lemma}{\rm (Truncated Poisson with extra steps)}
		Let $x\gg 1$ and $1\ll N,Q\ll x^{O(1)}$, $a\in \mathbb{Z}$, $q\in \mathbb{Z}_+$ with $q\asymp Q$, and $\Phi:(0,\infty)\rightarrow\mathbb{C}$ be a smooth function, $\Phi(t)$ supported in $t\asymp 1$, with $\Phi^{(k)}\ll_k1$ for $k\geq 0$. Then for any $A$, $\delta>0$ and $H:=x^{\delta}N^{-1}Q$, one has\begin{align*}
			\sum_{n\equiv a\mkern-15mu\pmod{q}}\Phi\left(\frac{n}{N}\right)=&\frac{N}{q}\hat{\Phi}(0)+O_{A,\delta}(x^{-A})\\&+\frac{N}{Q}\int\Phi\left(\frac{uq}{Q}\right)\sum_{\substack{H_j=2^j\\1\leq H_j\leq H}}\sum_{h\in\mathbb{Z}}e\left(h\frac{uN}{Q}\right)\Psi_j\left(\frac{|h|}{H_j}\right)e\left(-\frac{ah}{q}\right)\mathrm{d}u,
		\end{align*}
		where $\Psi_j:(1/2,2)\rightarrow\mathbb{C}$ are some compactly supported functions with $\Psi_j^{(k)}\ll_k1$ for $k\geq 0$.
	\end{lemma}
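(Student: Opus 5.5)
We would derive the statement from the classical Poisson summation formula applied to the residue class, followed by a smooth dyadic partition of unity in the dual variable $h$. The integral over $u$ comes from decoupling the dual phase from the individual modulus $q$.

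\textbf{Step 1: Poisson.} Write $n=a+qm$. Since $\Phi(n/N)$ is smooth and compactly supported, Poisson summation in $m$ gives
\begin{align*}
\sum_{n\equiv a\mkern-15mu\pmod{q}}\Phi\left(\frac{n}{N}\right)=\frac{N}{q}\sum_{h\in\mathbb{Z}}\hat{\Phi}\left(\frac{hN}{q}\right)e\left(\frac{ah}{q}\right).
\end{align*}
The term $h=0$ is the main term $\frac{N}{q}\hat\Phi(0)$. Repeated integration by parts, using $\Phi^{(k)}\ll_k 1$, gives $\hat\Phi(\xi)\ll_k|\xi|^{-k}$. Since $q\asymp Q$, every $|h|> H=x^\delta Q/N$ has $|hN/q|\gg x^{\delta}$. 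These terms therefore contribute $O_{A,\delta}(x^{-A})$ once $k$ is chosen large in terms of $A/\delta$; here we use $N,Q\ll x^{O(1)}$ to control the factor $N/q$. The sign convention for $e(\pm ah/q)$ can be matched to the statement by replacing $h$ with $-h$.

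\textbf{Step 2: separating $q$.} Expand $\hat\Phi(hN/q)=\int\Phi(t)e(-thN/q)\,\mathrm dt$. Then substitute $t=uq/Q$, so that $\mathrm dt=(q/Q)\,\mathrm du$ and $thN/q=huN/Q$. The prefactor becomes
\begin{align*}
\frac{N}{q}\cdot\frac{q}{Q}=\frac{N}{Q},
\end{align*}
and the phase becomes $e(-huN/Q)$, which after $h\to-h$ is exactly $e(huN/Q)$. This produces the factor $\frac{N}{Q}\int\Phi(uq/Q)\cdots\,\mathrm du$ of the statement, with $u\asymp1$. The point of this step is that the dual phase no longer involves $q$.

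\textbf{Step 3: dyadic decomposition in $h$.} Choose a smooth partition of unity $\sum_j\Psi(|h|/2^j)=1$ for $|h|\ge1$, with $\Psi$ supported in $(1/2,2)$. Keep only the $j$ with $1\le H_j=2^j\le H$.
\begin{itemize}
\item Inside the truncation the pieces sum to $1$.
\item Near the edge $|h|\approx H$ the cut-off only alters terms that are already $O(x^{-A})$, after increasing $\delta$ slightly or absorbing the edge into the error.
\end{itemize}
One may take $\Psi_j=\Psi$ for all $j$, except possibly the last one, which is modified to absorb the edge. All of these have derivatives $\ll_k1$.

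\textbf{Main obstacle.} The only delicate point is making the tail and edge error uniform. After Step 2 the decay must be measured in the variable $u$ through $\Phi(uq/Q)$. We would handle this by integrating by parts in $u$ for the terms with $|h|\gg H$, at fixed $q$, to obtain the $O_{A,\delta}(x^{-A})$ bound uniformly. The rest is bookkeeping.
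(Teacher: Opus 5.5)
Your proposal is correct, and it is the standard proof of this lemma. The paper does not prove the statement; it quotes it as [20, Lemma 3.1] (Pascadi). It adds only the remark that $\frac{N}{Q}\Phi(uq/Q)\,\mathrm{d}u=\frac{N}{q}\widetilde{\Phi}(uq/Q)\frac{\mathrm{d}u}{u}$ with $\widetilde{\Phi}(t)=t\Phi(t)$.

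Your Step 2 explains that remark: it is the substitution $t=uq/Q$ read backwards. For each fixed $h$ the $u$-integral is exactly
\[
\frac{N}{Q}\int\Phi\Big(\frac{uq}{Q}\Big)e\Big(h\frac{uN}{Q}\Big)\mathrm{d}u=\frac{N}{q}\hat{\Phi}\Big(-\frac{hN}{q}\Big).
\]
Your reconstruction is self-contained and shows the identity is exact apart from the tail; the paper's citation is simply shorter.

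Two small clean-ups to your write-up:

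\begin{itemize}
\item Your ``main obstacle'' is not an obstacle. Since the $u$-integral at fixed $h$ equals $\frac{N}{q}\hat{\Phi}(-hN/q)$, all the decay you need was already obtained in Step 1. Integrating by parts in $u$ is the same computation.
\item The real ordering constraint is different: multiply by the finite weight $w(h)=\sum_{1\le 2^j\le H}\Psi(|h|/2^j)$ before interchanging $\sum_h$ with $\int\mathrm{d}u$. This matters because $\sum_{h\in\mathbb{Z}}e(huN/Q)$ does not converge.
\end{itemize}

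You also do not need to modify the last $\Psi_j$ or enlarge $\delta$:

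\begin{itemize}
\item The factor $1-w(h)$ vanishes for $1\le |h|\le H/2$ and is bounded otherwise. This uses that the largest $2^J\le H$ exceeds $H/2$.
\item Every $|h|>H/2$ satisfies $|h|N/q\ge x^{\delta}Q/(2q)\gg x^{\delta}$, so these edge terms already lie in the negligible tail.
\item Summing $\hat{\Phi}(\xi)\ll_k|\xi|^{-k}$ over them gives a total contribution $\ll_k x^{-\delta(k-1)}$, uniformly in $a$ and $q$. This also covers the case $H<1$, where the dyadic sum is empty.
\end{itemize}
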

	\begin{proof} 
		This is [20, Lemma 3.1]. Note that the integrand is supported in $u\asymp 1$, and that one can write\begin{align*}
			\frac{N}{Q}\Phi\left(\frac{uq}{Q}\right)\mathrm{d}u=\frac{N}{q}\widetilde{\Phi}\left(\frac{uq}{Q}\right)\frac{\mathrm{d}u}{u},\quad\text{where}\ \widetilde{\Phi}(t):=t\Phi(t).
		\end{align*}
	\end{proof}
	The following assumption is due to Pascadi. For more details, see [20, Assumption 5.4].
	\begin{assumption}{\rm (Exceptional large sieve)}
		We say that a tuple $(q, N, Z, (a_n)_{n \sim N}, A, Y)$, with $q \in \mathbb{Z}_+$, $N \ge 1/2$, $Z \gg 1$, $A \gg \|a_n\|_2$, $Y > 0$, satisfies this assumption iff the following holds. For any $\varepsilon > 0$, $\xi \in \mathbb{R}$, any cusp $\mathfrak{a}$ of $\Gamma_0(q)$ with $\mu(\mathfrak{a}) = q^{-1}$, and any orthonormal basis of exceptional Maass cusp forms $f$ for $\Gamma_0(q)$, with Laplacian eigenvalues $\lambda_f$, $\theta_f := \sqrt{1/4-\lambda_f}$, and Fourier coefficients $\rho_{f\mathfrak{a}}(n)$, one has
		\[
		\sum_{\substack{f \\ \lambda_f < 1/4}}^{\Gamma_0(q)} X^{2\theta_f} \left\vert \sum_{n \sim N} e\left(\frac{n}{N} \xi \right) a_n\, \rho_{f\mathfrak{a}}(n) \right\vert^2 
		\ll_\varepsilon
		(qNZ)^\varepsilon
		\left(1 + \frac{N}{q}\right) 
		A^2,
		\]
		for all 
		\begin{equation} 
			X \ll \max\left(1, \frac{q}{N}\right) \frac{Y}{1+|\xi|^2}.\nonumber
		\end{equation}
	\end{assumption}
	
	Define
	\begin{align*}
		T_H(\alpha):=\min_{t\in\mathbb{Z}_+}(t+H\|t\alpha\|),
	\end{align*}
	where $\|\alpha\|:= \min_{n\in\mathbb{Z}} |\alpha-n|$. We have $T_H(\alpha)\ll 1+H|\alpha|$.
	\begin{lemma}
		Let $N \ge 1/2$, $L, H \gg 1$, $\alpha_1, \alpha_2 \in \mathbb{R}/\mathbb{Z}$, and $q, \ell_1, \ell_2 \in \mathbb{Z}_+$, $a \in \mathbb{Z}$ be such that $q \gg L^2$, $\ell_1, \ell_2 \asymp L$, and $(\ell_1, \ell_2) = 1$. Let $\Phi_i(t) : (0, \infty) \to \mathbb{C}$ be smooth functions supported in $t \ll 1$, with $\Phi_i^{(j)} \ll_j 1$ for all $j \ge 0$, and
		\[
		a_n := \sum_{\substack{h_1, h_2 \in \mathbb{Z} \\ a(h_1 \ell_1 - h_2 \ell_2) = n}} \Phi_1\left(\frac{h_1}{H}\right) \Phi_2\left(\frac{h_2}{H}\right) e(h_1 \alpha_1 + h_2 \alpha_2).
		\]
		Then the tuple $(q, N, H, (a_n)_{n \sim N}, A, Y)$ satisfies Assumption 2.3, where
		\[
		A := \|a_n\|_2 + \sqrt{N \left(\frac{H}{L} + \frac{H^2}{L^2}\right)}, \qquad\qquad Y := \max\left(1, \frac{NH}{|a|(H+L)L\min_i T_H(\alpha_i)}\right).
		\]
	\end{lemma}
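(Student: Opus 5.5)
This lemma is essentially Pascadi's large sieve bound for exceptional Maass forms against sequences built from pairs of bilinear frequencies. I would prove it by citing that result, and in the same breath check that the hypotheses match. The source is [20]: Pascadi proves there (in the section on incomplete Kloosterman sums, immediately after Assumption 5.4) that tuples of exactly this shape satisfy the exceptional large sieve assumption. So the first step is to identify the corresponding proposition in [20] and check that our normalizations agree with his. This means the support conditions on $\Phi_i$, the definition of $T_H(\alpha)$, the coprimality $(\ell_1,\ell_2)=1$, the size condition $q\gg L^2$, and the factor $|a|$ in $Y$ should all coincide. Then Lemma 2.4 follows verbatim.

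If one wants to reconstruct the argument rather than cite it, I would run it in four steps.

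First, open $|\sum_n e(n\xi/N)a_n\rho_{f\mathfrak a}(n)|^2$ and substitute the definition of $a_n$. This gives a sum over $(h_1,h_2)$ of smooth weights times $e(h_1\alpha_1+h_2\alpha_2)\,\rho_{f\mathfrak a}(a(h_1\ell_1-h_2\ell_2))$.

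Second, split into two ranges according to whether $H\lesssim L$ or $H\gtrsim L$. In the first range the map $(h_1,h_2)\mapsto h_1\ell_1-h_2\ell_2$ is essentially injective because $(\ell_1,\ell_2)=1$. In the second range each $n$ has about $H/L$ representations. This bookkeeping is what produces the term $\sqrt{N(H/L+H^2/L^2)}$ in $A$.

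Third, bound the weighted exceptional spectrum. The baseline is the Deshouillers--Iwaniec large sieve, which handles $X\ll\max(1,q/N)$ with no extra saving. To gain the additional factor $Y$, I would apply Kuznetsov in the dual direction, converting the exceptional-spectrum sum into Kloosterman sums $S(a n,a n';c)$ with $c\equiv0\pmod q$. Then I would exploit the additive structure of $n=a(h_1\ell_1-h_2\ell_2)$ by completing or Poisson-summing in $h_1,h_2$.

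Fourth, extract the saving from the phases. The phases $e(h_i\alpha_i)$ give cancellation unless $\alpha_i$ is close to a rational with small denominator $t$. This is what produces the denominator $\min_i T_H(\alpha_i)$, through the bound on the geometric-type sum over $h_i$ of length $H$ with twist $\alpha_i$.

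The main obstacle is the third step: obtaining the extra factor $X^{2\theta_f}$ with $X$ as large as $Y\max(1,q/N)/(1+|\xi|^2)$. This requires the delicate treatment of the Kloosterman side in Pascadi's (or Deshouillers--Iwaniec's) framework, including the uniformity in the cusp $\mathfrak a$ and in $\xi$. I would not redo this part; I would rely on [20] for it and limit the verification to the parameter matching.
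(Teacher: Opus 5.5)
Your proposal matches the paper, whose proof is only a citation: the lemma is deduced from [20, Theorem 1.7] (see also [19, Proposition 3.4]), so locating that statement and matching the normalizations is all that is required. Your four-step reconstruction is not needed; as you acknowledge, it is only a heuristic outline of how Pascadi's argument goes.
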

	\begin{proof}
		This follows from [20, Theorem 1.7]. (Also see [19, Proposition 3.4])
	\end{proof}
	
	\begin{lemma}
		Let $R$, $S$, $N\geq 1/2$, $C$, $D$, $Z\gg1$, and $Y$, $\varepsilon>0$. Let $\theta_{\max}=7/64$. For all $r\sim R$, $s\sim S$ with $(r,s)=1$, let:
		\begin{itemize}
			\item $w_{r,s} \in \mathbb{C}$;
			\item $\Phi_{r,s} : (0,\infty)^3 \to \mathbb{C}$ be smooth, with $\Phi_{r,s}(x,y,z)$ supported in $x,y,z \asymp 1$, and
			\[
			\partial_x^j \partial_y^k \partial_z^\ell \Phi_{r,s}(x,y,z) \ll_{j,k,\ell,\varepsilon} Z^{j\varepsilon},
			\quad \forall j,k,\ell \geq 0;
			\]
			\item $(rs, N, Z, (a_{n,r,s})_{n \sim N}, A_{r,s}, Y)$ be a tuple satisfying Assumption 2.3.
		\end{itemize}
		Then with a consistent choice of the sign $\pm$, it holds that\begin{align*}
			&\sum_{\substack{r\sim R\\s\sim S\\(r,s)=1}}w_{r,s}\sum_{n\sim N}a_{n,r,s}\sum_{\substack{c,d\\(rd,sc)=1}}\Phi_{r,s}\left(\frac{n}{N},\frac{d}{D},\frac{c}{C}\right)e\left(\pm n\frac{\overline{rd}}{sc}\right)\\&\ll_{\varepsilon}(RSNCDZ)^{O(\varepsilon)}||w_{r,s}A_{r,s}||_2\mathscr{I},
		\end{align*}
		where \begin{align*}
			\mathscr{I}^2:=D^2NR+\left(1+\frac{C^2}{R^2SY}\right)^{2\theta_{\max}}CS(C+DR)(RS+N).
		\end{align*}
	\end{lemma}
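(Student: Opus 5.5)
The plan is to follow the Deshouillers--Iwaniec strategy, as refined by Pascadi in [20]. The statement is essentially the version of Deshouillers--Iwaniec's Theorem 12 in which the exceptional spectrum is controlled by Assumption 2.3, so I would deduce it from [20] (the corresponding proposition there). I indicate below how the argument would go if run directly.

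\textbf{Step 1: separating variables.} I first separate variables in $\Phi_{r,s}(n/N,d/D,c/C)$. A Fourier/Mellin expansion writes it as a rapidly convergent superposition of products $\phi_1(n/N)\phi_2(d/D)\phi_3(c/C)$. The bounds $\partial_x^j\Phi\ll Z^{j\varepsilon}$ ensure that only $Z^{O(\varepsilon)}$ frequencies matter. The factor $e(n\xi/N)$ that arises is exactly what Assumption 2.3 permits, with $|\xi|\ll Z^{\varepsilon}$.

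\textbf{Step 2: Poisson in $d$.} I then complete the incomplete sum over $d$ modulo $sc$ by Poisson summation (Lemma 2.2 style). The zero frequency contributes a term bounded trivially, giving the $D^2NR$ part of $\mathscr I^2$ after Cauchy--Schwarz. The nonzero frequencies $m$ produce Kloosterman sums $S(n\bar r,\pm m;sc)$. These I recognize as Kloosterman sums $S_{\infty,1/s}(n,m;\gamma)$ for $\Gamma_0(rs)$ attached to the pair of cusps $\infty$ and $1/s$. The cusp $\mathfrak a=1/s$ has $\mu(\mathfrak a)=(rs)^{-1}$, matching the hypothesis in Assumption 2.3 with $q=rs$.

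\textbf{Step 3: Kuznetsov and the regular spectrum.} Next I apply the Kuznetsov formula for $\Gamma_0(rs)$ with test function adapted to $c\sim C$. This expresses the sum over $c$ of $S_{\infty,1/s}(n,m;sc\sqrt r)\,\phi(\cdot)$ as a sum over holomorphic forms, Maass forms and Eisenstein series of $\rho_{f\infty}(m)\overline{\rho_{f\mathfrak a}(n)}$. I then apply Cauchy--Schwarz, separating the $n$-side (with coefficients $a_{n,r,s}$ and the weights $w_{r,s}$) from the $m$-side. For the regular spectrum, the Deshouillers--Iwaniec spectral large sieve bounds both sides by $(1+N/(rs))$-type factors. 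This yields the term $CS(C+DR)(RS+N)\,\|w_{r,s}A_{r,s}\|_2^2$.

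\textbf{Step 4: the exceptional spectrum (main obstacle).} The exceptional eigenvalues $\lambda_f<1/4$ produce a loss $X^{2\theta_f}$ with $X\asymp C^2/(R^2S)\cdot(\text{scaling})$. On the $n$-side this is absorbed directly by Assumption 2.3, which gives savings up to $X\ll\max(1,q/N)\,Y/(1+|\xi|^2)$. On the $m$-side (smooth coefficients from Poisson) I would use the standard large sieve with the trivial Kim--Sarnak bound $\theta_f\le\theta_{\max}=7/64$. Balancing the two sides via Hölder, interpolating between what Assumption 2.3 saves, namely a factor $Y$, and the worst case $X^{2\theta_{\max}}$, gives the factor $(1+C^2/(R^2SY))^{2\theta_{\max}}$. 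The delicate point is tracking the size of $X$ uniformly in $r,s$ and in the Fourier parameter $\xi$ from Step 1, so that the assumption is applicable for every $f$. Assembling the pieces gives $\|w_{r,s}A_{r,s}\|_2\,\mathscr I$ with the stated $\mathscr I^2$.
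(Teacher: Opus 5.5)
Your proposal agrees with the paper, which proves this lemma only by citing Pascadi [20, Corollary 5.14]; your direct outline is the Deshouillers--Iwaniec argument behind that corollary: Poisson in $d$ (whose zero frequency gives the $D^2NR$ term), then Kuznetsov on $\Gamma_0(rs)$ at the cusps $\infty$ and $1/s$, then the spectral large sieve for the regular spectrum. In Step 4 you should use the generic ($Y=1$) Deshouillers--Iwaniec exceptional large sieve on the dual side rather than just the regular large sieve, keeping $\theta_f\le\theta_{\max}$ for the leftover ratio only; your sketch also asserts the exact factor $(1+C^2/(R^2SY))^{2\theta_{\max}}$ rather than deriving it, which is fine only because you ultimately rely on the citation.
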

	\begin{proof}
		This is [20, Corollary 5.14].
	\end{proof}
	
	\begin{lemma}
		Let $D\geq 2$ and $L>1$. Let $\mathscr{P}$ denote a set of primes. Let $z\geq 2$ and write $P(z):=\prod_{p\leq z,p\in\mathscr{P}}p$. There exist two sequences $\{\lambda_d^\pm\}_{d=1}^{\infty}$ of real numbers, vanishing for $d>D$ or $\mu(d)=0$, satisfying $\lambda_1^\pm=1$, $|\lambda_d^{\pm}|=O(1)$, $\lambda^-*1\leq \mu*1\leq \lambda^+*1$, and such that
		\begin{align*}
			&\sum_{d|P(z)}\lambda_d^+\frac{\omega(d)}{d}\leq \prod_{\substack{p\leq z\\p\in \mathscr{P}}}\left(1-\frac{\omega(p)}{p}\right)\left(F(s)+o(1)\right),\\&\sum_{d|P(z)}\lambda_d^-\frac{\omega(d)}{d}\geq \prod_{\substack{p\leq z\\p\in \mathscr{P}}}\left(1-\frac{\omega(p)}{p}\right)\left(f(s)+o(1)\right)
		\end{align*}
		uniformly for all multiplicative function $\omega$ satisfying
		\begin{align*}
			&(i)\ 0<\omega(p)<p\ (p\in\mathscr{P}),\\&(ii)\ \prod_{\substack{u<p\leq v\\p\in\mathscr{P}}}\left(1-\frac{\omega(p)}{p}\right)^{-1}\leq \frac{\log v}{\log u}\left(1+\frac{L}{\log u}\right)\ (2\leq u\leq v\leq z),
		\end{align*}
		where $s=\log D/\log z=O(1)$ and $f(s)$, $F(s)$ are determined by the differential-difference equations
		\begin{equation}
			\begin{aligned}
				&\left\{ \begin{aligned}
					&sF(s)=2e^{\gamma}, \quad&& 1\leq s\leq 2;\\&sf(s)=0,\quad&& 0\leq s\leq 2;\\&(sF(s))'=f(s-1),(sf(s))'=F(s-1),\quad&&s\geq 2.
				\end{aligned}\right.\nonumber
			\end{aligned}
		\end{equation}
		In particular, the functions $\lambda^{\pm}$ can be rewrited as
		\begin{align*}
			\lambda^{\pm}=\sum_{h<\exp(8\varepsilon^{-3})}\lambda^{\pm}(\cdot,h),
		\end{align*}
		where $\lambda^{\pm}(\cdot,h)$ is a well-factorable function of level $D$.
	\end{lemma}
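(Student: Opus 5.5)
This statement is essentially the linear Rosser--Iwaniec sieve, together with Iwaniec's well-factorability refinement. The plan is to import it rather than reprove it from scratch.

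\emph{Sieve inequalities.} I would take $\lambda^\pm$ to be the upper and lower Rosser (beta-sieve) weights with $\beta=2$ and sifting range $\mathscr{P}\cap[2,z)$. Concretely, $\lambda^+_d=\mu(d)$ when $d=p_1\cdots p_r$ with $p_1>\dots>p_r$ and
\[
p_1\cdots p_{m-1}p_m^{3}<D\quad\text{for all odd } m,
\]
and $\lambda^+_d=0$ otherwise; $\lambda^-$ uses the same condition for even $m$. Both are supported on squarefree $d\le D$, satisfy $\lambda_1^\pm=1$ and $|\lambda_d^\pm|\le1$, and the combinatorial inequalities $\lambda^-*1\le\mu*1\le\lambda^+*1$ follow from Buchstab's identity truncated at the first index where the Rosser condition fails.

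The bounds on $\sum\lambda_d^\pm\omega(d)/d$ are the fundamental theorem of the linear sieve. Under condition (ii) (dimension $\kappa=1$), iterating Buchstab's identity and comparing the resulting Mertens-type sums with integrals leads to the differential-difference system for $F$ and $f$ with the stated initial conditions. The error is $o(1)$ uniformly in $\omega$, since $\omega$ enters only through (ii). For these two parts I would cite Iwaniec [12] (or Friedlander--Iwaniec, \emph{Opera de Cribro}, Thm.~11.13) and not redo them.

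\emph{Well-factorable decomposition.} This is the real content. Following Iwaniec, I would first fix $\varepsilon$ and partition primes $p<z$ into intervals $(z^{(1+\varepsilon^2)^{-j-1}},z^{(1+\varepsilon^2)^{-j}}]$; below $D^{\varepsilon^{3}}$ say, I would replace the sieve by a fundamental-lemma (Brun) sieve, at a cost of only $o(1)$ in the main term. Each $d$ in the support then has $O(\varepsilon^{-3})$ large prime factors. I would record the vector $h$ of interval indices of those factors, giving $\exp(O(\varepsilon^{-3}))$ classes. Within a class the Rosser condition depends only on $h$ (up to the $\varepsilon^2$ fuzz, absorbed by slightly shrinking $D$ and adjusting the $o(1)$). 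So $\lambda^\pm(\cdot,h)$ is, up to sign, the indicator of products of primes in prescribed intervals.

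Iwaniec's key combinatorial lemma is the main obstacle. It says that for any $D_1D_2=D$, the prime factors in such a class can be greedily split into two sub-products of sizes $\le D_1$ and $\le D_2$, using the condition $p_1\cdots p_m^{3}<D$ (or $p_m^2$ in the other parity). Given this, $\lambda(\cdot,h)=\alpha*\beta$ with $1$-bounded $\alpha,\beta$ supported on $[1,D_1]$ and $[1,D_2]$, which is exactly well-factorability of level $D$. Here I would follow Iwaniec's proof of the ``well-factorable'' theorem verbatim: order the primes decreasingly and add each prime to whichever factor still has room, checking that the Rosser inequality guarantees room. The fundamental-lemma part factors trivially because its primes are tiny. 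The count $h<\exp(8\varepsilon^{-3})$ then comes from bounding the number of interval-index vectors.
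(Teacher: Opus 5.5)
Your proposal is correct and follows the paper's route. The paper's proof is just the citation ``this is the Rosser--Iwaniec sieve [12]'', and your outline (Rosser weights with $\beta=2$, Buchstab iteration giving $F,f$, and Iwaniec's interval classes with greedy splitting for well-factorability) is the content of that reference. Two points in your sketch need care: the weights that split into $<\exp(8\varepsilon^{-3})$ well-factorable pieces are Iwaniec's $\varepsilon$-modified weights, whose main term is $F(s)+O(\varepsilon)+o(1)$ rather than $F(s)+o(1)$ for fixed $\varepsilon$ (harmless, since the paper lets $\varepsilon\to0$), and the greedy split for $\lambda^-$ needs $p_1^2<D$, essentially $s\ge 2$, which covers the case $D=z^2$ actually used.
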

	\begin{proof}
		This lemma is the Rosser-Iwaniec sieve [12].
	\end{proof}  
	
	\begin{lemma}
		For any given positive constant $A>0$, there exists a constant $B=B(A)>0$ such that the estimate\begin{align*}
			\sum_{q\leq x^{1/2}/(\log x)^B}\max_{y\leq x}\max_{(a,q)=1}\Big|\sum_{\substack{L_1<l\leq L_2\\(l,q)=1}}f(l)\Big(\sum_{\substack{lp\leq y\\lp\equiv a\mkern-15mu\pmod{q}}}1-\frac{\mathrm{li}(y/l)}{\varphi(q)}\Big)\Big|\ll\frac{x}{(\log x)^A}
		\end{align*}
		and
		\begin{align*}
			\sum_{q\leq x^{1/2}/(\log x)^B}\max_{y\leq x}\max_{(a,q)=1}\Big|\sum_{\substack{L_1<l\leq L_2\\(l,q)=1}}f(l)\Big(\sum_{\substack{p\leq y/L_2\\lp\equiv a\mkern-15mu\pmod{q}}}1-\frac{\mathrm{li}(y/L_2)}{\varphi(q)}\Big)\Big|\ll\frac{x}{(\log x)^A}
		\end{align*}
		hold for $(\log y)^{2B}<L_1\leq L_2<x^{1-\varepsilon}$, where $|f(l)|\leq 1$ and the constant implied by the symbol ``$\ll$" depends only on $\varepsilon$ and $A$.
	\end{lemma}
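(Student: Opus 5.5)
This is a Pan–Ding–Wang type result, i.e. a Bombieri–Vinogradov theorem for the convolution $f*1_{\mathbb P}$. The natural proof combines Vaughan's (or Heath-Brown's) identity with the large sieve over characters.

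\textbf{Step 1: reduce to characters.} Fix $q$ and expand the congruence condition $lp\equiv a \pmod q$ with Dirichlet characters. The principal character reproduces the main term $\mathrm{li}(y/l)/\varphi(q)$, up to the error $\pi(y/l)-\mathrm{li}(y/l)$. Summed over $l$ and $q$, that error is acceptable by the prime number theorem with de la Vallée Poussin error term, because $y/l\geq x^{\varepsilon}$ once $y$ exceeds $x^{1-\varepsilon/2}$; the range of smaller $y$ is handled trivially. Each remaining term is then bounded by
\[
\frac{1}{\varphi(q)}\sum_{\chi\neq\chi_0}\Big|\sum_{l}f(l)\chi(l)\sum_{p\leq y/l}\chi(p)\Big|.
\]
Pass to primitive characters $\chi\pmod r$ with $r\mid q$. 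As in the usual proof of Bombieri–Vinogradov, the sum over $q\equiv 0\pmod r$ costs a factor $\log x/r$. The sharp cutoff $lp\leq y$ is removed by Perron's formula, or equivalently by splitting $l$ and $p$ into short intervals of ratio $1+(\log x)^{-C}$; this gives a genuine bilinear form $\sum_l\sum_p \alpha_l\beta_p\chi(lp)$.

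\textbf{Step 2: the bilinear estimate.} Write $\beta_p$ as $\Lambda$ restricted to primes (prime powers contribute negligibly). Apply the multiplicative large sieve in the hybrid form
\[
\sum_{r\leq R}\frac{r}{\varphi(r)}\sideset{}{^*}\sum_{\chi}\Big|\sum_{l}\alpha_l\chi(l)\Big|\Big|\sum_p\beta_p\chi(p)\Big|\ll (R^2+L)^{1/2}(R^2+P)^{1/2}\|\alpha\|\|\beta\|\log x,
\]
with $P=y/L$. This is enough whenever both $L$ and $P$ exceed $(\log x)^{B'}$, since then $R^2\leq x/(\log x)^{2B}$ leaves enough room to beat $x(\log x)^{-A}$. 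The hypothesis $L_1>(\log y)^{2B}$ is exactly this requirement on the $l$ side, and $L_2<x^{1-\varepsilon}$ forces $P\geq x^{\varepsilon}$ on the $p$ side.

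\textbf{Step 3: small moduli via Siegel–Walfisz.} For $r\leq(\log x)^{C}$, use the Siegel–Walfisz theorem on the inner prime sum $\sum_{p\leq y/l}\chi(p)$. This is legitimate because $y/l\geq x^{\varepsilon}$, so the saving $\exp(-c\sqrt{\log x})$ is available uniformly. For $(\log x)^{C}<r\leq R$, split dyadically and use the large sieve bound above, divided by $r$. This gives the saving $(\log x)^{-C/2}$ from the $L$-term and the $P$-term, together with $x^{1/2}R/(\log x)^{B}$ from the $R^2$-term.

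\textbf{Second estimate and the main obstacle.} The second estimate, in which the $p$-range $p\leq y/L_2$ is independent of $l$, is easier. Here the form is already separable, so no Perron step is needed and the same argument applies directly.

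The main obstacle is bookkeeping rather than new ideas. The $\max_{y}$ must be controlled uniformly, which Perron's formula with a $\log x$ loss handles. The smoothing error from the short intervals must be kept below $x(\log x)^{-A}$, which is done by choosing the interval ratio $1+(\log x)^{-A-2}$ and bounding the boundary pieces trivially via Brun–Titchmarsh. Finally, $B$ must be chosen in terms of $A$.

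Alternatively, the statement can simply be cited as [18, Theorem 2] together with [10, Theorem 9.16]; the second form is [18] with $y$ replaced by $y/L_2$.
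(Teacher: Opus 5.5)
The paper gives no argument for this lemma; it is quoted from L\"u--Wang [16, Lemma 2.3] (cf.\ [18, Theorem 2]). Your sketch reconstructs the standard proof behind that citation: character expansion, the prime number theorem for $\chi_0$, Siegel--Walfisz for conductors $r\leq(\log x)^C$, and the multiplicative large sieve (with Perron's formula for $\max_y$) for larger conductors. This gives a self-contained argument at the cost of bookkeeping the paper avoids. The route is sound. You never need Vaughan's or Heath-Brown's identity, or $\Lambda$. The sum is already the bilinear form $f*1_{\mathbb{P}}$, with $l>L_1>(\log y)^{2B}$ and, once $y\leq x^{1-\varepsilon/2}$ is discarded trivially, a $p$-range of length at least $x^{\varepsilon/2}$. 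These are exactly what the two hypotheses provide.

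However, the reduction to primitive characters does not go through ``as in the usual proof'' of Bombieri--Vinogradov. There one has $\psi(y,\chi)-\psi(y,\chi^*)\ll(\log qy)^2$. Here, for $\chi\bmod q$ induced by $\chi^*\bmod r$, one has $\chi(l)=\chi^*(l)1_{(l,q)=1}$. The $l$ with $(l,r)=1$ but $(l,q)>1$ form a positive proportion of the range; for example, when $q=2r$ with $r$ odd, this is all even $l$ coprime to $r$. So the inner quantity depends on $q$ and not only on $\chi^*$, and you cannot pull out $\sum_{q\equiv 0\,(r)}1/\varphi(q)\ll \log x/\varphi(r)$.

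The standard repair is as follows. Write $q=rs$, use $\varphi(rs)\geq\varphi(r)\varphi(s)$, and put the sum over $s$ outermost. For each fixed $s$, run Siegel--Walfisz and the large sieve with the $1$-bounded weights $f(l)1_{(l,s)=1}$ and the primes $p\nmid s$. These sequences no longer depend on $r$, and the cost is still only $\sum_{s\leq Q}1/\varphi(s)\ll\log x$.

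Two smaller corrections:
\begin{itemize}
\item The second estimate is not [18] with $y$ replaced by $y/L_2$. That substitution would impose $lp\leq y/L_2$, whereas here $p\leq y/L_2$ is independent of $l$. So your direct argument is what is needed: treat it as a separable product, still using Perron for $\max_y$ in the $p$-sum.
\item In Step 3 the bookkeeping is off. For $r\sim R_0$, the large sieve gives $x^{1/2}R_0+x/L^{1/2}+x/P^{1/2}+x/R_0$ up to logarithms. The $(\log x)^{-C}$ saving comes from the last term, while the two cross terms are controlled by the lengths $L$ and $P$.
\end{itemize}
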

	\begin{proof}
		This is [16, Lemma 2.3] (see [18, Theorem 2]). 
	\end{proof}
	
	\begin{lemma}
		Define
		\begin{align*}
			H(n):=\prod_{p>2,p|n}\left(\frac{p-1}{p-2}\right),
		\end{align*}
		Then for $x\gg a$ and $y>x^{\varepsilon}$, we have
		\begin{align*}
			&\sum_{\substack{n\leq x\\(n,a)=1}}H(n)=\frac{cx}{2^{\varepsilon(a)}H(a)}(1+o(1)),\\&\sum_{\substack{n\leq x,P^+(n)\leq y\\(n,a)=1}}H(n)=\frac{c\Psi(x,y)}{2^{\varepsilon(a)}H(a)}(1+o(1)),
		\end{align*}
		where 
		\begin{align*}
			c=\prod_{p>2}\left(1+\frac{1}{p(p-2)}\right).
		\end{align*}
	\end{lemma}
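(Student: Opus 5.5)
The plan is to write $H$ as a convolution $H=1*g$ with a rapidly decaying correction $g$, and reduce both asymptotics to counting $m\le x/d$ coprime to $a$ (smooth in the second case). Throughout I read $\varepsilon(a)=1$ if $2\mid a$ and $0$ otherwise; this is the reading that makes the constant come out.

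\emph{Step 1 (the convolution).} Since $H$ is multiplicative with $H(p^k)=H(p)=\frac{p-1}{p-2}$ for odd $p$ and $H(2^k)=1$, we have $H=1*g$. Here $g$ is multiplicative, supported on odd squarefree integers, with
\begin{align*}
g(p)=H(p)-1=\frac{1}{p-2}\qquad(p>2).
\end{align*}
Hence $\sum_d |g(d)|d^{-\sigma}<\infty$ for every $\sigma>1/2$, and in particular $\sum_{d>D}g(d)/d\ll D^{-1/2}$. Coprimality is inherited: $(n,a)=1$ if and only if $(d,a)=1$ and $(m,a)=1$ for $n=dm$. Therefore
\begin{align*}
\sum_{\substack{n\le x\\(n,a)=1}}H(n)=\sum_{\substack{d\\(d,a)=1}}g(d)\,\#\{m\le x/d:(m,a)=1\}.
\end{align*}

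\emph{Step 2 (the first asymptotic).} Insert $\#\{m\le X:(m,a)=1\}=\frac{\varphi(a)}{a}X+O(\tau(a))$ for $d\le x$. The error is $\ll\tau(a)\sum_{d\le x}|g(d)|\ll\tau(a)\log x$, which is $o(x)$ when $x\gg a$. The range $d>x$ contributes nothing. Completing the sum $\sum_{(d,a)=1}g(d)/d$ costs $O(x^{-1/2})$ relative to the main term, so the main term is
\begin{align*}
x\,\frac{\varphi(a)}{a}\prod_{\substack{p>2\\p\nmid a}}\Big(1+\frac{1}{p(p-2)}\Big).
\end{align*}
Next rewrite this as $c\,x$ divided by a local correction. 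For odd $p\mid a$ one checks
\begin{align*}
\Big(1-\frac1p\Big)\Big(1+\frac{1}{p(p-2)}\Big)^{-1}=\frac{p-2}{p-1}=H(p)^{-1},
\end{align*}
while the prime $p=2$, if it divides $a$, contributes only the factor $1/2$. This gives exactly $c\,x/(2^{\varepsilon(a)}H(a))$.

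\emph{Step 3 (the smooth version).} Here $d$ and $m$ must both be $y$-smooth. I truncate at $d\le D_0:=\log x$. The tail $d>D_0$ contributes $\ll x\sum_{d>D_0}|g(d)|/d\ll x(\log x)^{-1/2}$. This is $o(\Psi(x,y))$, because $y>x^{\varepsilon}$ gives $u\le1/\varepsilon$, and Lemma 2.1 then yields $\Psi(x,y)\gg_\varepsilon x$.

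For $d\le D_0$ I need
\begin{align*}
\#\{m\le x/d:\ P^+(m)\le y,\ (m,a)=1\}=\frac{\varphi(a)}{a}\,\frac{\Psi(x,y)}{d}\,(1+o(1)).
\end{align*}
The dependence on $d$ is handled by Lemma 2.1: $\Psi(x/d,y)=\frac{x}{d}\rho(u-\log d/\log y)(1+o(1))$, and $\log d/\log y\ll\log\log x/\log x\to0$. Since $\rho$ is continuous and bounded away from $0$ on $[0,1/\varepsilon]$, this equals $\frac{\Psi(x,y)}{d}(1+o(1))$. Summing over $d$ then reproduces the constant from Step 2. Also, $y$-smoothness only removes $d$ with a prime factor $>y$, which lie in the tail already discarded.

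\emph{Main obstacle.} The delicate step is the coprimality condition inside the smooth count. I would expand $(m,a)=1$ by Möbius as $\sum_{e\mid a}\mu(e)\Psi(x/(de),y)$, or equivalently absorb it into the convolution. For squarefree divisors $e\mid a$ with $e\le x^{o(1)}$, the same continuity of $\rho$ gives $\Psi(x/(de),y)\approx\Psi(x,y)/(de)$. For the remaining $e$ I would use the trivial bound $\Psi(x/(de),y)\le x/(de)$, summed over $e$ with $\mu(e)^2$. This needs $\sum_{e\mid a,\,e\text{ large}}1/e=o(1)$, which is where the hypothesis $x\gg a$ must be interpreted: it suffices that $a$ is fixed, or that $a\le x^{o(1)}$ with $\omega(a)$ small. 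In the applications $a$ will be of this kind.
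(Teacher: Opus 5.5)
The paper does not prove this lemma; it only cites [3, p.136] and [15, (3.5)]. Your argument is therefore a genuinely different, self-contained route. What it buys over the citation is explicit error terms and an explicit range of uniformity in $a$.

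Steps 1--2 are correct. Writing $H=1*g$, with $g$ supported on odd squarefree $d$ and $g(p)=1/(p-2)$, and then counting $m\le x/d$ coprime to $a$, gives the first asymptotic uniformly for $a\le x$, with error $O(\tau(a)\log x+\sqrt{x})$. Your reading of $\varepsilon(a)$ and the local identity $(1-\frac1p)(1+\frac{1}{p(p-2)})^{-1}=\frac{p-2}{p-1}$ are exactly what produce the stated constant. Note also that $c=A_0^{-1}$, which is how the constant enters (3.6) and \S3.2.

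There is one small slip. Convergence of $\sum_d g(d)d^{-\sigma}$ for $\sigma>1/2$ does not by itself give the tail bound $D^{-1/2}$. This is harmless, since the series converges for every $\sigma>0$, so the tail is even $\ll D^{-1+o(1)}$.

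Step 3 is correct for fixed $a$. That already covers every smooth sum this paper uses: in \S3.2 and after (4.6) one has $a=1$.

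The genuine shortfall is your last paragraph. As written, it leaves the second asymptotic unproved for $x^{o(1)}<a\le x$. Yet the hypothesis $x\gg a$ is the same for both claims, and in Step 2 you used it as $a\ll x$. The restriction you propose is unnecessary, but your criterion is misstated. The discarded divisors and the accumulated errors must be $o(\varphi(a)/a)$ times the main term, not $o(1)$, and $\varphi(a)/a$ can be as small as $1/\log\log x$.

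To close the gap, split $\sum_{e\mid a}\mu(e)\Psi(x/(de),y)$ at $E=\exp(\sqrt{\log x})$.

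For $e\le E$ and $d\le\log x$, use Lemma 2.1 together with $|\rho'|\le 1$ and $\rho(u)\ge\rho(1/\varepsilon)$. This gives $\Psi(x/(de),y)=\frac{\Psi(x,y)}{de}\bigl(1+O_\varepsilon((\log x)^{-1/2})\bigr)$. The errors sum to $\ll(\log x)^{-1/2}\frac{a}{\varphi(a)}\frac{\Psi(x,y)}{d}$. Relative to the main term $\frac{\varphi(a)}{a}\frac{\Psi(x,y)}{d}$ this is negligible, because $(a/\varphi(a))^2\ll(\log\log x)^2$.

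For $e>E$, use Rankin's trick with $\sigma=1/\log\log x$:
\[
\sum_{\substack{e\mid a\\ e>E}}\frac{\mu^2(e)}{e}\le E^{-\sigma}\prod_{p\mid a}\bigl(1+p^{\sigma-1}\bigr)\le\exp\Bigl(-\frac{\sqrt{\log x}}{\log\log x}\Bigr)(\log\log x)^{O(1)}.
\]
The last bound holds because, for $a\le x$, the product is largest when the primes dividing $a$ are the first $\omega(a)\ll\log x/\log\log x$ primes. These are all $\ll\log x$, and for such $p$ one has $p^{\sigma}\ll1$. This estimate controls both the trivially bounded terms with $e>E$ and the cost of completing $\sum_{e\mid a,\,e\le E}\mu(e)/e$ to $\varphi(a)/a$.

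Hence
\[
\#\{m\le x/d:\ P^+(m)\le y,\ (m,a)=1\}=\frac{\varphi(a)}{a}\frac{\Psi(x,y)}{d}\bigl(1+o(1)\bigr)
\]
uniformly for $d\le\log x$ and $a\le x$. The rest of Step 3 then goes through unchanged. The tail $x(\log x)^{-1/2}$ from $d>\log x$ is still $o(\Psi(x,y)/H(a))$, because $\Psi(x,y)\gg_\varepsilon x$ and $H(a)\ll\log\log x$.
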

	\begin{proof}
		We can see it in [3, p.136] and [15, (3.5)].
	\end{proof}
	\subsection*{Convention.} We use $\varepsilon$ to denote a sufficiently small positive number, and the value of $\varepsilon$ may change from statement to statement. We use $\mu(n)$ and $\varphi(n)$ to denote the M$\mathrm{\ddot{o}}$bius function and Euler's function, respectively. We use $\omega(\nu)$ and $\rho(u)$ to denote the Buchstab function and Dickman function, respectively. By $(m_1,m_2)$ we denote the greatest common divisor of $m_1$ and $m_2$. We use the standard asymptotic notation $f\ll g,f\gg g,f=O(g),f=o_{x\rightarrow\infty}(g)$ from analytic number theory, and indicate that the implicit constants depend on some parameter $\varepsilon$ through subscripts. By $m\sim M$ we denote $M< m\leq2M$. Statements like $f(x)\ll x^{o(1)}g(x)$ should be read as $\forall \varepsilon>0$, $f(x)\ll_\varepsilon x^\varepsilon g(x)$. Define
	\begin{align*}
		P(x):=\prod_{p\leq x}p,\quad P(y,z):=\prod_{y<p\leq z}p.
	\end{align*}

	\section{Proof of Theorem 1.1}
	We use $\varepsilon$ to denote a suffciently small positive number, and the value of $\varepsilon$ may change from statement to statement. Fix $e^{-5/16}<c<1$.
	We apply the following lemma to estimate $T_c(x)$.\begin{lemma}
		For $0<c<1$ and sufficiently large $x$, we have
		\begin{align*}
			\sum_{\substack{p\leq x\\P^+(p-1)\geq p^c}}1=\sum_{\substack{p\leq x\\P^+(p-1)\geq x^{c}}}1+O\left(\frac{x\log\log x}{(\log x)^2}\right).
		\end{align*}
	\end{lemma}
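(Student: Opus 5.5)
The two counts differ only through primes $p\le x$ for which $p^c\le P^+(p-1)<x^c$, or $x^c\le P^+(p-1)<p^c$. In the second case $p>x^{c}$ trivially, and $P^+(p-1)\ge x^c$ already forces $p> x^c$. So the plan is to bound the size of the symmetric difference directly, splitting at $p\le x/(\log x)^{2}$ versus $x/(\log x)^2<p\le x$.

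\emph{Small primes.} Primes $p\le x/(\log x)^2$ contribute at most $\pi(x/(\log x)^2)\ll x/(\log x)^3$. This is well within the error term.

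\emph{Large primes.} Let $x/(\log x)^2<p\le x$. Then $p^c$ and $x^c$ satisfy $x^c(\log x)^{-2c}\le p^c\le x^c$. Hence any exceptional $p$ has $p-1=mq$ with $q$ prime and
\[
x^c(\log x)^{-2}\le q\le x^c .
\]
The second alternative is empty here, since $p^c\le x^c$. We must therefore count primes $p\le x$ with $p-1=mq$, where $q$ is a prime in $I:=[x^c(\log x)^{-2},x^c]$. Since $q>x^{c}(\log x)^{-2}$ is large and $m<x/q\le x^{1-c}(\log x)^2$, we fix $q$ and count primes $p\equiv1\pmod q$ with $p\le x$. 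By Brun--Titchmarsh, valid because $q\le x^c$ with $c<1$ leaves $x/q\ge x^{1-c}$, this count is
\[
\ll \frac{x}{\varphi(q)\log(x/q)}\ll_c \frac{x}{q\log x}.
\]
(Alternatively one can use an upper-bound sieve in $m$, requiring both $m$ and $mq+1$ to avoid small primes; this gives the same bound.) Summing over primes $q\in I$ and using Mertens' theorem gives
\[
\sum_{q\in I}\frac1q=\log\frac{\log x^c}{\log(x^c(\log x)^{-2})}+O\Big(\frac1{\log x}\Big)\ll\frac{\log\log x}{\log x}.
\]
Therefore this contribution is $\ll x\log\log x/(\log x)^2$, which is the claimed error.

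\emph{Main obstacle.} The only delicate point is uniformity of Brun--Titchmarsh in $q$, which is fine because $q\le x^c$ keeps $x/q\ge x^{1-c}\to\infty$; the constant depends only on $c$. Combining the two ranges gives the lemma.
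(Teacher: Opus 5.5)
Your argument is correct, and it takes a different route from the paper, which gives no argument and simply cites [31, Theorem 2].

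Your proof runs as follows:
\begin{enumerate}
\item Since $p\le x$ forces $p^c\le x^c$, the right-hand set is contained in the left-hand one, so only primes with $p^c\le P^+(p-1)<x^c$ need to be counted.
\item The primes $p\le x/(\log x)^2$ are discarded trivially.
\item For the remaining $p$, the prime $q=P^+(p-1)$ is confined to $I=[x^c(\log x)^{-2},x^c)$.
\item Brun--Titchmarsh, using $\log(x/q)\ge(1-c)\log x$, together with Mertens gives $\ll_c \frac{x}{\log x}\sum_{q\in I}\frac1q\ll_c \frac{x\log\log x}{(\log x)^2}$.
\end{enumerate}

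This makes the lemma self-contained using only Brun--Titchmarsh and Mertens. The cost is an implied constant depending on $c$, roughly of size $1/(c(1-c))$. That is harmless: the statement's ``sufficiently large $x$'' already permits dependence on $c$, and $c$ is fixed in Section 3. The cited theorem may carry uniformity in $c$ that your argument does not reproduce, but it is not needed here.

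If you replace the single cut at $x/(\log x)^2$ by a dyadic decomposition $p\in(x/2^{k+1},x/2^k]$, the exceptional $q$ lies in $[x^c2^{-(k+1)c},x^c)$, so $\sum 1/q\ll_c (k+1)/\log x$. Summing $\frac{x}{2^k\log x}\cdot\frac{k+1}{\log x}$ over $k$ then removes the $\log\log x$ altogether.

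Your parenthetical alternative, sieving in $m$ while requiring $m$ to avoid small primes, is not right as written. Here $m=(p-1)/q$ is even, so it cannot avoid small primes; the sensible variant sieves over $q$ for fixed $m$. Since your proof does not use this alternative, it does not affect correctness.
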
\begin{proof}
		This is [31, Theorem 2].
	\end{proof}
	We start from the following expression
	\begin{align*}
		T_c'(x):=\sum_{\substack{p'\leq x\\P^+(p'-1)\geq x^c}}1=\sum_{x^c\leq p<x}\sum_{\substack{\ell p\leq x\\\ell p+1\ \text{is prime}\\2|\ell}}1\leq \sum_{\substack{ \ell\leq x^{1-c}\\2|\ell }}\sum_{\substack{\ell p\leq x\\\ell p+1\ \text{is prime}}}1.
	\end{align*}
	Fix $0<\delta<1/100$. Let $y=x^{(1-c)\varepsilon/(3J)}$, where $J=J(\delta)$ is a parameter satisfying
	\begin{align*}
		\frac{(\log(1/\delta))^{J+1}}{(J+1)!}\leq \delta.
	\end{align*}
	Since
	\begin{align*}
		\sum_{\ell}=\sum_{\substack{\ell\\(\ell,P(y^\delta,y))>1}}+\sum_{\substack{\ell\\(\ell,P(y^\delta,y))=1}},
	\end{align*}
	we write
	\begin{align*}
		T_c'(x)\leq \sum_{\substack{ \ell\leq x^{1-c}\\(\ell,P(y^\delta,y))>1\\2|\ell }}\sum_{\substack{\ell p\leq x\\\ell p+1\ \text{is prime}}}1+\sum_{\substack{ \ell\leq x^{1-c}\\(\ell,P(y^\delta,y))=1\\2|\ell }}\sum_{\substack{\ell p\leq x\\\ell p+1\ \text{is prime}}}1=\mathscr{S}_1+\mathscr{S}_2.\tag{3.1}
	\end{align*}
	\subsection{Estimate of $\mathscr{S}_1$}
	
	Note that for any real sequence $(a_\ell)$ with $a_{\ell}\geq 0$, we have
	\begin{align*}
		\sum_{\substack{(\ell,P(y^\delta,y))>1}}a_{\ell}&=\sum_{y^\delta<p_1\leq y}\sum_{\substack{(\ell,P(y^\delta,y))=1}}a_{p_1\ell}+\frac{1}{2!}\sum_{y^\delta<p_1,p_2\leq y}\sum_{\substack{(\ell,P(y^\delta,y))=1}}a_{p_1p_2\ell}\\&\quad+\frac{1}{3!}\sum_{y^\delta<p_1,p_2,p_3\leq y}\sum_{\substack{(\ell,P(y^\delta,y))=1}}a_{p_1p_2p_3\ell}+\cdots+O\Bigg(\sum_{y^{\delta/2}<p\leq y^{1/2}}\sum_{\ell}a_{p^2\ell}\Bigg)\\&\leq \sum_{1\leq j\leq J}\frac{1}{j!}\sum_{y^\delta<p_1,\cdots,p_j\leq y}\sum_{\substack{(\ell,P(y^\delta,y))=1}}a_{p_1\cdots p_j\ell}+\frac{1}{(J+1)!}\sum_{y^\delta<p_1,\cdots,p_{J+1}\leq y}\sum_{\substack{p_1\cdots p_{J+1}|\ell}}a_{\ell}\\&\quad+O\Bigg(\sum_{y^{\delta/2}<p\leq y^{1/2}}\sum_{\ell}a_{p^2\ell}\Bigg).\tag{3.2}
	\end{align*}
	Let $D=z^2=x^{5/8-\varepsilon}$ and $P(z)=\prod_{p\leq z}p$.
	Define
	\begin{align*}
		\mathcal{L}:=\{\ell\leq x^{1-c}:(\ell,P(y^\delta,y))=(\ell,P^2(y^\delta,y))>1,2|\ell\}.
	\end{align*}
	By Lemma 2.6, there exists a sequence
	 $\{\lambda_{d}^+\}_{d=1}^{\infty}$ of real numbers, vanishing for $d>D$ or $\mu(d)=0$, satisfying $\lambda_1^+=1$, $|\lambda_{d}^+|=O(1)$, $0\leq \mu*1\leq \lambda^+*1$. In particular, this sequence $\lambda^+$ can be expressed as a sum of well-factorable sequences.
	 We have
	\begin{align*}
		\mathscr{S}_1&\leq  \sum_{\substack{ \ell\in\mathcal{L}}}\sum_{\substack{\ell p\leq x\\(\ell p+1,P(z))=1}}1+O(z)+O\Bigg(\sum_{y^{\delta/2}<p_1\leq y^{1/2}}\sum_{\substack{p_1\leq x\\p\equiv 1\mkern-15mu\pmod{p_1^2}}}1\Bigg)\\&\leq \sum_{\substack{ \ell\in\mathcal{L} }}\sum_{\substack{\ell p\leq x}}\sum_{d|(\ell p+1,P(z))}\lambda_d^++o(\pi(x))\\&\leq \sum_{1\leq j\leq J}\frac{1}{j!}\sum_{y^\delta<p_1,\cdots,p_j\leq y}\sum_{\substack{ \ell\leq x^{1-c}\\p_1\cdots p_j|\ell\\(\ell(p_1\cdots p_j)^{-1},P(y^\delta,y))=1\\2|\ell }}\sum_{\substack{\ell p\leq x}}\sum_{d|(\ell p+1,P(z))}\lambda_d^+\\&\quad +\frac{1}{(J+1)!}\sum_{y^\delta<p_1,\cdots,p_{J+1}\leq y}\sum_{\substack{ \ell\leq x^{1-c}\\p_1\cdots p_{J+1}|\ell\\2|\ell }}\sum_{\substack{\ell p\leq x}}\sum_{d|(\ell p+1,P(z))}\lambda_d^++o(\pi(x)).\tag{3.3}
	\end{align*}
	For any $1\leq j\leq J$, changing the order of summation, we write
	\begin{align*}
		&\sum_{y^\delta<p_1,\cdots,p_j\leq y}\sum_{\substack{ \ell\leq x^{1-c}\\p_1\cdots p_j|\ell\\(\ell(p_1\cdots p_j)^{-1},P(y^\delta,y))=1\\2|\ell }}\sum_{\substack{\ell p\leq x}}\sum_{d|(\ell p+1,P(z))}\lambda_d^+\\&=\sum_{d|P(z)}\lambda_d^+\sum_{y^\delta<p_1,\cdots,p_j\leq y}\sum_{\substack{ \ell\leq x^{1-c}\\p_1\cdots p_j|\ell\\(\ell(p_1\cdots p_j)^{-1},P(y^\delta,y))=1\\2|\ell }}\sum_{\substack{\ell p\leq x\\(\ell p,d)=1}}\frac{1}{\varphi(d)}\\&\quad+\sum_{d|P(z)}\lambda_d^+\sum_{y^\delta<p_1,\cdots,p_j\leq y}\sum_{\substack{ \ell\leq x^{1-c}\\p_1\cdots p_j|\ell\\(\ell(p_1\cdots p_j)^{-1},P(y^\delta,y))=1\\2|\ell }}\sum_{\substack{\ell p\leq x}}\Bigg(1_{\ell p\equiv -1\mkern-15mu\pmod{d}}-\frac{1_{(\ell p,d)=1}}{\varphi(d)}\Bigg)\\&=M_j+R_j.
	\end{align*}
	Write $\ell=p_1\cdots p_j\ell_1$ with $(\ell_1,P(y^c,y))=1$.
	In order to prove $R_j\ll x(\log x)^A$, putting $p_1,e=p_2\cdots p_j \ell_1$ and $p$ in dyadic range, we only need to prove that for any $A>1$
	\begin{align*}
		\sum_{q}\lambda_q\sum_{\substack{p_1\in\mathscr{P}_1}}\sum_{e\in\mathscr{E}}b_e\sum_{p\in\mathscr{P}}\Bigg(1_{p_1ep\equiv a\mkern-15mu\pmod{q}}-\frac{1_{(p_1ep,q)=1}}{\varphi(q)}\Bigg)\ll\frac{x}{(\log x)^A},\tag{3.4}
	\end{align*}
	where $\lambda_q$ is well factorable function of level $D$, $|b_e|\leq 1$, $\mathscr{P}_1=[(1-\Delta)P_1,P_1]$, $\mathscr{E}=[(1-\Delta)E,E]$, $\mathscr{P}=[(1-\Delta)P,P]$ with $P_1EP\asymp x$, $P_1E\ll x^{1-c}$, $y^\delta<P_1\leq y$, $\Delta=(\log x)^{-A_1}$. ($A_1$ is a suitable constant.)
	
	We need the following lemma due to Pascadi. (see [19, Proposition 4.4])
	\begin{lemma}
		Let $a\in\mathbb{Z}\setminus\{0\}$, $A,\varepsilon>0$, and $M,N,x,Q_1,Q_2,Q_3\gg1$ satisfy $MN\asymp x$, $N>x^\varepsilon$. 
		Let $(\alpha_n)$, $(\beta_m)$ be $1$-bounded complex sequences, such that $(\alpha_n)$ is supported on $P^-(n)\geq z_0:=x^{1/(\log\log x)^3}$ and satisfies the Siegel-Walfisz condition, which means
		\begin{align*}
			\Bigg|\sum_{\substack{n\sim N\\n\equiv b\mkern-15mu\pmod{q}\\(n,d)=1}}\alpha_n-\frac{1}{\varphi(q)}\sum_{\substack{n\sim N\\(n,dq)=1}}\alpha_n\Bigg|\ll_A\frac{N\tau(d)^{O(1)}}{(\log N)^A},
		\end{align*}
		for any $d\geq 1, q\geq 1$ and $(b,q)=1$, $A>1$.
		If 
		\begin{align*}
			Q_1&\leq Nx^{-\varepsilon} ,\\N^2Q_2Q_3^2&\leq x^{1-15\varepsilon},\\N^{2}Q_2^5Q_3^2&\leq x^{2-40\varepsilon},
		\end{align*} then for any $1$-bounded complex sequences $(\gamma_{q_1})$, $(\lambda_{q_2})$, $(\nu_{q_3})$ supported on $(q_i,a)=1$, one has\begin{align*}
			\sum_{q_1\sim Q_1}\gamma_{q_1}\sum_{q_2\sim Q_2}\lambda_{q_2}\sum_{q_3\sim Q_3}\nu_{q_3}\sum_{n\sim N}\alpha_n\sum_{m\sim M}\beta_m\Bigg(1_{mn\equiv a\mkern-15mu\pmod{q}}-\frac{1_{(mn,q)=1}}{\varphi(q)}\Bigg)\ll_{\varepsilon,A,a}\frac{x}{(\log x)^A}.
		\end{align*}
	\end{lemma}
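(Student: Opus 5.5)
The statement is Pascadi's [19, Proposition 4.4], so in the paper it suffices to quote it. If I were to reprove it, I would follow the dispersion argument of Maynard [17] for triply-well-factorable weights, and insert Pascadi's spectral input (Lemmas 2.2, 2.4 and 2.5) at the Kloosterman stage.

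\textbf{Step 1: Cauchy--Schwarz and expansion.} Write $q=q_1q_2q_3$ and keep the smooth modulus part $q_1\le Nx^{-\varepsilon}$ together with the variable $n$. First remove the coprimality conditions and smooth the $m$-sum with a weight $\Phi(m/M)$. Then apply Cauchy--Schwarz in $(m,q_2,q_3)$, which eliminates $\beta_m$, $\lambda_{q_2}$ and $\nu_{q_3}$. Expanding the square gives a sum over $n_1,n_2\sim N$ and $q_1,q_1'\sim Q_1$ of
\begin{align*}
\sum_{m}\Phi\Big(\frac{m}{M}\Big)1_{mn_1\equiv a\ (q_1q_2q_3)}1_{mn_2\equiv a\ (q_1'q_2q_3)},
\end{align*}
minus the two cross terms and the expected main term. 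The cross terms and the main term are evaluated with the Siegel--Walfisz condition on $\alpha_n$. The support condition $P^-(n)\ge z_0$ controls common factors of $n_1n_2$ with the moduli, and the near-diagonal $n_1\equiv n_2$ modulo $q_2q_3$ is negligible because $Q_1\le Nx^{-\varepsilon}$.

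\textbf{Step 2: Poisson summation and reduction to Kloosterman sums.} Apply Lemma 2.2 to the $m$-sum modulo $[q_1,q_1']q_2q_3$. The zero frequency cancels the main terms. The nonzero frequencies $h\ll H=x^{\delta}Q_1^2Q_2Q_3/M$ produce phases of the form $e\big(ah\,\overline{n_1q_1}/(n_2 q_1'q_2q_3)\big)$ after reciprocity; the dyadic decomposition in $h$ in Lemma 2.2 is what makes this workable. Reorganize the phases in the shape of Lemma 2.5, with
\begin{itemize}
\item $r$ and $s$ built from $q_1$, $q_1'$, $q_2$,
\item $c\sim Q_3$ and $d$ ranging over the remaining $n$-variable,
\item the $n$-variable of Lemma 2.5 given by the combination $a(h_1\ell_1-h_2\ell_2)$ that arises after a second Cauchy--Schwarz removes $\alpha_{n}$.
\end{itemize}
For this coefficient sequence, Lemma 2.4 shows that Assumption 2.3 holds with explicit $A$ and $Y$. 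This is where Pascadi's improvement over the Selberg-eigenvalue bound $\theta_{\max}=7/64$ enters.

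\textbf{Step 3: Bookkeeping and the main obstacle.} Inserting the resulting bound $\mathscr{I}$ from Lemma 2.5 into the error term, the total must be $\ll x^2/(Q_2Q_3M)\cdot x^{-\varepsilon}$. This requirement should translate exactly into the three hypotheses
\begin{align*}
Q_1\le Nx^{-\varepsilon},\qquad N^2Q_2Q_3^2\le x^{1-15\varepsilon},\qquad N^2Q_2^5Q_3^2\le x^{2-40\varepsilon}.
\end{align*}
The first comes from the diagonal; the other two come from the terms $D^2NR$ and $CS(C+DR)(RS+N)$ in $\mathscr{I}^2$, once $Y$ is large enough that the factor $(1+C^2/(R^2SY))^{2\theta_{\max}}$ is $x^{o(1)}$.

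I expect the main obstacle to be this last point. One must verify that, in every dyadic range of $h$ and for generic $\alpha_i$, the quantity $Y$ supplied by Lemma 2.4 dominates $C^2/(R^2S)$, so that exceptional eigenvalues cost nothing. My first attempt would be to use $T_H(\alpha)\ll 1+H|\alpha|$, with $\alpha$ arising from the smooth phase $e(huN/Q)$ of Lemma 2.2. For ranges where $T_H$ is large, I would fall back on the trivial $\theta_{\max}$ bound and check that this still fits within the stated constraints.
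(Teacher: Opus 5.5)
Citing [19, Proposition 4.4] is exactly what the paper does, and it gives no argument of its own, so your first sentence is all the lemma needs here. Your reproof sketch, however, does not work as written. Step 1 puts the wrong part of the modulus outside the Cauchy--Schwarz.

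\textbf{The problem with Step 1.} Suppose you apply Cauchy--Schwarz over $(m,q_2,q_3)$ and keep $(q_1,n)$ inside. Then $|S|^2\lesssim Q_2Q_3M\,\mathcal{E}$, so you need $\mathcal{E}\ll N^2M(\log x)^{-2A}/(Q_2Q_3)$. The expansion forces $n_1\equiv n_2\pmod{q_2q_3}$. The diagonal $n_1=n_2$ occurs only in the first dispersion term, is not cancelled by anything, and contributes about $NM$. You would therefore need $Q_2Q_3\le Nx^{-\varepsilon}$. This is not implied by the hypotheses. It also fails badly in the case the paper actually uses: there $Q_1=1$, $N=P_1\le y$ is a tiny power of $x$, and $Q_2Q_3$ goes up to $x^{5/8-\varepsilon}$. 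Your claim that this near-diagonal is negligible because $Q_1\le Nx^{-\varepsilon}$ has no basis in your arrangement, since $Q_1$ does not enter it.

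\textbf{The set-up that matches the hypotheses.} As in Maynard's and Pascadi's arguments, apply Cauchy--Schwarz over $m$ and $q_1$, with $q_2,q_3,n$ inside. Then $n_1\equiv n_2\pmod{q_1}$. The main terms have size $N^2M/Q_1$ against a diagonal of size about $NM$, and that comparison is exactly the condition $Q_1\le Nx^{-\varepsilon}$. Accordingly, the Poisson step is modulo $q_1[q_2q_3,q_2'q_3']$ with $H\approx x^{\delta}Q_1Q_2^2Q_3^2/M$. It is not modulo $[q_1,q_1']q_2q_3$ with $H\approx x^{\delta}Q_1^2Q_2Q_3/M$. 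Your Step 2 assignment of variables to $r,s,c,d$ in Lemma 2.5 must then be rebuilt from this shape before any Step 3 bookkeeping can produce the two remaining conditions.

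\textbf{Step 3 leaves the decisive point open.} Falling back on the bare $\theta_{\max}=7/64$ bound where $T_H$ is large cannot be expected to reach these ranges. They give level $5/8$ in the paper's application, beyond the earlier $66/107$. Showing that Lemma 2.4 supplies a large enough $Y$ in every range is the actual content of Pascadi's proposition, not a routine check.
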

	
	Applying Lemma 3.2 with $Q_1=1$, $\varepsilon\leftarrow(1-c)\delta\varepsilon/(3J)$, $\alpha_n=1_{n\ \text{is prime}}1_{ n\in \mathscr{P}_1}$,
	we obtain that (3.4) holds,
	by writing $\lambda=\lambda_2*\lambda_3$ where $\lambda_2(q_2)$ is supported on $q_2\leq x^{1/4-\varepsilon}$, $\lambda_3(q_3)$ is supported on $q_3\leq x^{3/8}$. We have controlled the remainder term.
	
	Now we begin to estimate the main term. We have
	\begin{align*}
		M_j&=\sum_{d|P(z)}\lambda_d^+\sum_{y^\delta<p_1,\cdots,p_j\leq y}\sum_{\substack{ \ell\leq x^{1-c}(p_1\cdots p_j)^{-1}\\(\ell,P(y^\delta,y))=1\\2|\ell }}\sum_{\substack{p_1\cdots p_j\ell p\leq x}}\frac{1_{(p_1\cdots p_j\ell p,d)=1}}{\varphi(d)}\\&=\sum_{y^\delta<p_1,\cdots,p_j\leq y}\sum_{\substack{ \ell\leq x^{1-c}(p_1\cdots p_j)^{-1}\\(\ell,P(y^\delta,y))=1\\2|\ell }}\pi(x/(p_1\cdots p_j\ell))\sum_{d|P'(z)}\frac{\lambda_d^+}{\varphi(d)},
	\end{align*}
	where $P'(z)=\prod_{p<z,(p,p_1\cdots p_j\ell)=1}p$. By Lemma 2.6, we have
	\begin{align*}
		\sum_{d|P'(z)}\frac{\lambda_d^+}{\varphi(d)}&\leq (F(2)+o(1))\prod_{\substack{2<p\leq z\\(p,p_1\cdots p_j\ell)=1}}\left(1-\frac{1}{p-1}\right)\\&=(e^\gamma+o(1))\prod_{\substack{2<p\leq z}}\left(\frac{p-2}{p-1}\right)\prod_{p|\ell}\left(\frac{p-1}{p-2}\right)
	\end{align*}
	Mertens' theorem implies
	\begin{align*}
		\prod_{2<p\leq z}\left(\frac{p-1}{p-2}\right)=\frac{2e^{-\gamma}A_0+o(1)}{\log z},
	\end{align*}
	where
	\begin{align*}
		A_0=\prod_{p>2}\left(1-\frac{1}{(p-1)^2}\right).
	\end{align*}
	Then we have
	\begin{align*}
		M_j\leq (2A_0+o(1))\frac{x}{\log D}\sum_{y^\delta<p_1,\cdots,p_j\leq y}\sum_{\substack{ \ell\leq x^{1-c}(2p_1\cdots p_j)^{-1}\\(\ell,P(y^\delta,y))=1}}\frac{H(p_1\cdots p_j\ell)}{p_1\cdots p_j\ell\log(x/(2p_1\cdots p_j\ell))},\tag{3.5}
	\end{align*}
	Summing over all $1\leq j\leq J$, by partial summation and Lemma 2.8, we have
	\begin{align*}
		\sum_{1\leq j\leq J}\frac{M_j}{j!}&\leq (2A_0+o(1))\frac{x}{\log D}\sum_{1\leq j\leq J}\frac{1}{j!}\sum_{y^\delta<p_1,\cdots,p_j\leq y}\sum_{\substack{ \ell\leq x^{1-c}/2\\p_1\cdots p_j|\ell\\(\ell(p_1\cdots p_j)^{-1},P(y^\delta,y))=1}}\frac{H(\ell)}{\ell\log(x/(2\ell))}\\&\leq (2A_0+o(1))\frac{x}{\log D}\sum_{\substack{ \ell\leq x^{1-c}/2}}\frac{H(\ell)}{\ell\log(x/(2\ell))}\\&=(2+o(1))\frac{x}{\log D}\int_1^{x^{1-c}/2}\frac{1}{u\log(x/(2u))}\mathrm{d}u\\&=\left(\frac{2}{5/8-\varepsilon}\log\frac{1}{c}+o(1)\right)\frac{x}{\log x}.\tag{3.6}
	\end{align*}
	Similar to (3.4) and (3.5), for a suitable absolute constant $c_1$, we have
	\begin{align*}
		&\sum_{y^\delta<p_1,\cdots,p_{J+1}\leq y}\sum_{\substack{ \ell\leq x^{1-c}\\p_1\cdots p_{J+1}|\ell\\2|\ell }}\sum_{\substack{\ell p\leq x}}\sum_{d|(\ell p+1,P(z))}\lambda_d^+\\&\leq(2A_0+o(1))\frac{x}{\log D} \sum_{y^\delta<p_1,\cdots,p_{J+1}\leq y}\sum_{\substack{ \ell\leq x^{1-c}(2p_1\cdots p_{J+1})^{-1} }}\frac{H(\ell)}{p_1\cdots p_{J+1}\ell\log(x/(2p_1\cdots p_{J+1}\ell))}\\&\leq  (c_1+o(1))\frac{x}{\log x} \sum_{y^\delta<p_1,\cdots,p_{J+1}\leq y}\frac{1}{p_1\cdots p_{J+1}}\\&= (c_1+o(1))\frac{x}{\log x} \left(\log\frac{1}{\delta}\right)^{J+1}.
	\end{align*}
	
	To summarize all the above, we obtain
	\begin{align*}
		\mathscr{S}_1\leq \left(\frac{2}{5/8-\varepsilon}\log\frac{1}{c}+c_1\delta+o(1)\right)\pi(x).\tag{3.7}
	\end{align*}

	\subsection{Estimate of $\mathscr{S}_2$} We will show that for sufficiently small $\delta$, $\mathscr{S}_2$ is small.
	Now we choose $D=z^2=x^{1/2}/(\log x)^{B}$.
	We have
	\begin{align*}
		\mathscr{S}_2&=\sum_{\substack{ \ell\leq x^{1-c}\\(\ell,P(y^\delta,y))=1\\2|\ell }}\sum_{\substack{\ell p\leq x\\\ell p+1\ \text{is prime}}}1\leq \sum_{\substack{ (\log x)^{2B}<\ell\leq x^{1-c}\\(\ell,P(y^\delta,y))=1\\2|\ell }}\sum_{\substack{\ell p\leq x\\(\ell p+1,P(z))=1}}1+O(z)\\&\leq \sum_{\substack{ (\log x)^{2B}<\ell\leq x^{1-c}\\(\ell,P(y^\delta,y))=1\\2|\ell }}\sum_{\substack{\ell p\leq x}}\sum_{d|(\ell p+1,P(z))}\lambda_d^++o(\pi(x))\\&= \sum_{d|P(z)}\lambda_d^+\sum_{\substack{ (\log x)^{2B}<\ell\leq x^{1-c}\\(\ell,P(y^\delta,y))=1\\2|\ell }}\sum_{\substack{\ell p\leq x\\\ell p+1\equiv 0\mkern-15mu\pmod{d}}}1+o(\pi(x))\\&=\sum_{d|P(z)}\frac{\lambda_d^+}{\varphi(d)}\sum_{\substack{ (\log x)^{2B}<\ell\leq x^{1-c}\\(\ell,dP(y^\delta,y))=1\\2|\ell }}\pi(x/\ell)\\&\quad+\sum_{d|P(z)}\lambda_d^+\sum_{\substack{ (\log x)^{2B}<\ell\leq x^{1-c}\\(\ell,dP(y^\delta,y))=1\\2|\ell }}\Bigg(\sum_{\substack{\ell p\leq x\\\ell p+1\equiv 0\mkern-15mu\pmod{d}}}1-\frac{\pi(x/\ell)}{\varphi(d)}\Bigg)+o(\pi(x)).\tag{3.8}
	\end{align*}
	By Lemma 2.7, we can bound the second summation by
	\begin{align*}
		\ll \frac{x}{(\log x)^A},
	\end{align*}
	which is negligible.
	For the main term, for a suitable absolute constant $c_2$, we have\begin{align*}
		\sum_{d|P(z)}\frac{\lambda_d^+}{\varphi(d)}\sum_{\substack{ \ell\leq x^{1-c}\\(\ell,dP(y^\delta,y))=1\\2|\ell }}\pi(x/\ell)&=\sum_{\substack{\ell \leq x^{1-c}\\(\ell, P(y^\delta,y))=1\\2|\ell}}\pi(x/\ell)\sum_{d|P'(z)}\frac{\lambda_d^+}{\varphi(d)}\\&\leq (F(2)+o(1))\sum_{\substack{\ell \leq x^{1-c}\\(\ell, P(y^\delta,y))=1\\2|\ell}}\pi(x/\ell)\prod_{\substack{p\leq z\\(p,\ell)=1}}\left(1-\frac{1}{p-1}\right)\\&\leq (c_2+o(1))\frac{x}{(\log x)^2}\sum_{\substack{\ell \leq x^{1-c}\\(\ell, P(y^\delta,y))=1}}\frac{H(\ell)}{\ell}\,\tag{3.9}
	\end{align*}
	where $P'(z)=\sum_{p\leq z, (p,\ell)=1}p$. 
	Write $\ell=\ell_1\ell_2$ with $P^+(\ell_1)\leq y^\delta$ and $P^-(\ell_2)>y$.
	If $\ell_2=1$, we have
	\begin{align*}
		\sum_{\substack{\ell \leq x^{1-c}\\P^+(\ell)\leq y^\delta}}\frac{H(\ell)}{\ell}&=\sum_{\substack{\ell \leq y^\delta}}\frac{H(\ell)}{\ell}+\sum_{\substack{y^\delta<\ell \leq x^{1-c}\\P^+(\ell)\leq y^\delta}}\frac{H(\ell)}{\ell}\\&\leq (A_0^{-1}\delta+o(1))\log y+(A_0^{-1}+o(1))\int_{y^\delta}^{x^{1-c}}\frac{1}{t}\mathrm{d}\Bigg(t\rho\left(\frac{\log t}{\log y^\delta}\right)\Bigg)\\&\leq \Bigg(A_0^{-1}\left(1+\int_{1}^{\infty}\rho(u)\mathrm{d}u\right)\delta+o(1)\Bigg)\log y
	\end{align*}
	Putting $l_2$ in dynadic range $\ell_2\sim L_2$ with $y<L_2<x^{1-c}$, we have
	\begin{align*}
		\sum_{\substack{\ell \leq x^{1-c}\\(\ell, P(y,x^{1-c}))>1\\(\ell, P(y^\delta,y))=1}}\frac{H(\ell)}{\ell}\leq \frac{(1-c)\log x}{\log 2} \max_{y\leq L_2<x^{1-c}}\sum_{\substack{\ell_2\sim L_2\\P^-(\ell_2)>y}}\frac{1}{L_2}\sum_{\substack{\ell_1\leq x^{1-c}(2L_2)^{-1}\\P^+(\ell_1)\leq y^\delta}}\frac{H(\ell_1)}{\ell_1}.
	\end{align*}
	If $x^{1-c}y^{-\delta}<L_2<x^{1-c}$, we have
	\begin{align*}
		\sum_{\substack{\ell_2\sim L_2\\P^-(\ell_2)>y}}\frac{1}{L_2}\sum_{\substack{\ell_1\leq x^{1-c}(L_2)^{-1}\\P^+(\ell_1)\leq y^\delta}}\frac{H(\ell_1)}{\ell_1}\leq\sum_{\substack{\ell_2\sim L_2\\P^-(\ell_2)>y}}\frac{1}{L_2}\sum_{\substack{\ell_1\leq y^\delta}}\frac{H(\ell_1)}{\ell_1}\leq A_0^{-1}\delta+o(1).
	\end{align*}
	If $y<L_2\leq x^{1-c}y^{-\delta}$, we have
	\begin{align*}
		\sum_{\substack{\ell_2\sim L_2\\P^-(\ell_2)>y}}\frac{1}{L_2}\sum_{\substack{\ell_1\leq x^{1-c}(L_2)^{-1}\\P^+(\ell_1)\leq y^\delta}}\frac{H(\ell_1)}{\ell_1}&\leq \sum_{\substack{\ell_2\sim L_2\\P^-(\ell_2)>y}}\frac{1}{L_2}\sum_{\substack{\ell_1\leq y^\delta}}\frac{H(\ell_1)}{\ell_1}+\sum_{\substack{\ell_2\sim L_2\\P^-(\ell_2)>y}}\frac{1}{L_2}\sum_{\substack{y^\delta<\ell_1\leq x}}\frac{H(\ell_1)}{\ell_1}\\&\leq A_0^{-1}\delta+o(1)+\frac{A_0^{-1}+o(1)}{\log y}\int_{y^\delta}^{x}\frac{1}{t}\mathrm{d}\Bigg(t\rho\left(\frac{\log t}{\log y^\delta}\right)\Bigg)\\&\leq  A_0^{-1}\left(1+\int_{1}^{\infty}\rho(u)\mathrm{d}u\right)\delta+o(1).
	\end{align*}
	Finally, for a suitable absolute constant $c_3$, we have
	\begin{align*}
		\mathscr{S}_2&\leq \Bigg(c_3\delta+o(1)\Bigg)\pi(x).\tag{3.10}
	\end{align*}
	
	\subsection{Conclusion}
	Conclude from (3.1), (3.7) and (3.10) that for any $e^{-5/16}<c<1$, we have
	\begin{align*}
		\mathop{\lim\sup}_{x\rightarrow \infty}\frac{T_c'(x)}{\pi(x)}\leq \frac{2\log (1/c)}{5/8-\varepsilon}+\left(c_1+c_3\right)\delta,
	\end{align*}
	for any $\varepsilon>0$ and $0<\delta<1/100$, $c_1$ and $c_3$ are two suitable absolute constants. As $\max(\varepsilon,\delta)\rightarrow 0^+$, we complete the proof of Theorem 1.1.

	\section{Proof of Theorem 1.3}
	In this section, we follow the work in [32], and we can obtain a better level for the remainder term.
	Following the work in [32, after (4.9)], we need to give an upper bound of the expression
	\begin{align*}
		S=&\sum_{x^{1-\eta_1}<p'<x^{1-\eta_1+t_1+t_2}}\left(\log x-\frac{1}{1-\eta_1+t_1+t_2}\log p'\right)\sum_{x^{1-\eta_1}<p<x^{1-\eta_2}}\sum_{\substack{dp\leq x\\dp-1\equiv0\mkern-15mu\pmod{p'}}}1\\\leq&\log x\sum_{x^{\eta_1-t_1-t_2}/(\log x)^B<m<x^{\eta_1}}f_1(m)|\{dp\in \mathcal{B}(m):(dp-1)/m\  \text{is a prime number}\}|\\&+O\left(\frac{x}{(\log x)^A}\right),\tag{4.1}
	\end{align*}
	where $0<\eta_2<\eta_1<1/2$, $t_1+t_2<\eta_1$, $\min(t_1,t_2)>0$,
	\begin{align*}
		f_1(m)=1-\frac{1}{1-\eta_1+t_1+t_2}\frac{\log(x/(m(\log x)^B))}{\log x},
	\end{align*}
	\begin{align*}
			\mathcal{B}(m)=\{dp\leq x:x^{1-\eta_1}<p<x^{1-\eta_2},dp-1\equiv0\mkern-15mu\pmod{m}\}.
	\end{align*}
	Put $m\in(x^{\eta_1-t_1-t_2}/(\log x)^B,x^{\eta_1})$ in dynadic range $m\sim\mathcal{M}$. We have
	\begin{align*}
		&\sum_{m\sim \mathcal{M}}f_1(m)|\{dp\in \mathcal{B}(m):(dp-1)/m\  \text{is a prime number}\}|\\&\leq \sum_{m\sim \mathcal{M}}f_1(m)\sum_{\substack{x^{\eta_2}/(\log x)^B<d<x^{\eta_1}\\2|dm}}\sum_{\substack{p\leq x/d\\(dp-1)/m\ \text{is prime}}}1+O\left(\frac{x}{(\log x)^A}\right)\\&\leq \sum_{\substack{m\sim \mathcal{M}\\2|m}}f_1(m)\sum_{\substack{x^{\eta_2}/(\log x)^B<d<x^{\eta_1}}}\sum_{\substack{p\leq x/d\\(dp-1)/m\ \text{is prime}}}1\\&\quad + \sum_{m\sim \mathcal{M}}f_1(m)\sum_{\substack{x^{\eta_2}/(\log x)^B<d<x^{\eta_1}\\2|d}}\sum_{\substack{p\leq x/d\\(dp-1)/m\ \text{is prime}}}1+O\left(\frac{x}{(\log x)^A}\right).\tag{4.2}
	\end{align*}
	\subsection{Case 1: $0<\eta_1<3/8$}
	Let $D=z^2=x^{5/8-\varepsilon}(2\mathcal{M})^{-1}$.
	Fix $0<\delta<1/100$. Let $y=x^{\eta_2\varepsilon/(3J)}$, where $J=J(\delta)$ is a parameter satisfying
	\begin{align*}
		\frac{(\log(1/\delta))^{J+1}}{(J+1)!}\leq \delta.
	\end{align*}
	Similar to (3.3), there exists a sequence $\{\lambda_{d'}^+\}_{d'=1}^{\infty}$ of real numbers, vanishing for $d'>D$ or $\mu(d')=0$, satisfying $\lambda_1^+=1$, $|\lambda_{d'}^+|=O(1)$, $0\leq \mu*1\leq \lambda^+*1$ such that
	\begin{align*}
		\sum_{\substack{x^{\eta_2}/(\log x)^B<d<x^{\eta_1}}}&\sum_{\substack{p\leq x/d\\(dp-1)/m\ \text{is prime}}}1\leq \sum_{\substack{x^{\eta_2}/(\log x)^B<d<x^{\eta_1}}}\sum_{\substack{p\leq x/d\\((dp-1)/m,P(z))=1}}1\\&\leq \sum_{\substack{x^{\eta_2}/(\log x)^B<d<x^{\eta_1}}}\sum_{\substack{p\leq x/d}}\sum_{d'|((dp-1)/m,P(z))}\lambda^+_{d'}\\&\leq \sum_{1\leq j\leq J}\sum_{y^\delta<p_1,\cdots,p_j\leq y}\sum_{\substack{x^{\eta_2}/(\log x)^B<d<x^{\eta_1}\\p_1\cdots p_j|d\\(d(p_1\cdots p_j)^{-1}, P(y^\delta,y))=1}}\sum_{\substack{p\leq x/d}}\sum_{d'|((dp-1)/m,P(z))}\lambda^+_{d'}\\&\quad+\sum_{y^\delta<p_1,\cdots,p_{J+1}\leq y}\sum_{\substack{x^{\eta_2}/(\log x)^B<d<x^{\eta_1}\\p_1\cdots p_{J+1}|d}}\sum_{\substack{p\leq x/d}}\sum_{d'|((dp-1)/m,P(z))}\lambda^+_{d'}\\&\quad+\sum_{\substack{x^{\eta_2}/(\log x)^B<d<x^{\eta_1}\\(d,P(y^\delta,y))=1}}\sum_{\substack{p\leq x/d}}\sum_{d'|((dp-1)/m,P(z))}\lambda^+_{d'}+O\left(\frac{x}{my^\delta}\right),\tag{4.3}
	\end{align*}
	where the term $O(\cdot)$ is neligiable.
	Note that for a well-factorable function $\lambda_{d'}=\lambda(d')$, we can write
	\begin{align*}
		\nu*\lambda=\nu*\lambda_2*\lambda_3=(\nu*\lambda_3)*\lambda_2,
	\end{align*}
	where $\nu=1_{(\mathcal{M},2\mathcal{M}]}f_1$, $\lambda=\lambda_2*\lambda_3$, $\lambda_2(q_2)$ is supported on $q_2\leq x^{1/4-\varepsilon}$, $\lambda_3(q_3)$ is supported on $q_3\leq x^{3/8}(2\mathcal{M})^{-1}$.
	Then by Lemma 3.2, for any $A>1$, we have
	\begin{align*}
		\sum_{m\sim \mathcal{M}}f_1(m)\sum_{d'}\lambda_{d'}\sum_{\substack{p_1\in\mathscr{P}_1}}\sum_{e\in\mathscr{E}}b_e\sum_{p\in\mathscr{P}}\Bigg(1_{p_1ep\equiv a\mkern-15mu\pmod{md'}}-\frac{1_{(p_1ep,md')=1}}{\varphi(md')}\Bigg)\ll\frac{x}{(\log x)^A},
	\end{align*}
	where $\lambda_{d'}$ is a well factorable function of level $D$, $|b_e|\leq 1$, $\mathscr{P}_1=[(1-\Delta)P_1,P_1]$, $\mathscr{E}=[(1-\Delta)E,E]$, $\mathscr{P}=[(1-\Delta)P,P]$ with $P_1EP\asymp x$, $x^{\eta_2}/(\log x)^B<P_1E< x^{\eta_1}$,  $y^\delta<P_1\leq y$, $\Delta=(\log x)^{-A_1}$. ($A_1$ is a suitable constant.) Thus we can bound the remainder term in the following expression
	\begin{align*}
		&\sum_{m\sim \mathcal{M}}f_1(m)\sum_{y^\delta<p_1,\cdots,p_j\leq y}\sum_{\substack{x^{\eta_2}/(\log x)^B<d<x^{\eta_1}\\p_1\cdots p_j|d\\(d(p_1\cdots p_j)^{-1}, P(y^\delta,y))=1}}\sum_{\substack{p\leq x/d}}\sum_{d'|((dp-1)/m,P(z))}\lambda^+_{d'}\\&=\sum_{m\sim \mathcal{M}}f_1(m)\sum_{d'|P(z)}\lambda^+_{d'}\sum_{y^\delta<p_1,\cdots,p_j\leq y}\sum_{\substack{x^{\eta_2}/(\log x)^B<d<x^{\eta_1}\\p_1\cdots p_j|d\\(d(p_1\cdots p_j)^{-1}, P(y^\delta,y))=1}}\sum_{\substack{p\leq x/d\\dp\equiv 1\mkern-15mu\pmod{md'}}}1\\&=\sum_{m\sim \mathcal{M}}f_1(m)\sum_{d'|P(z)}\lambda^+_{d'}\sum_{y^\delta<p_1,\cdots,p_j\leq y}\sum_{\substack{x^{\eta_2}/(\log x)^B<d<x^{\eta_1}\\p_1\cdots p_j|d\\(d(p_1\cdots p_j)^{-1}, P(y^\delta,y))=1\\(d,md')=1}}\frac{\pi(x/d)}{\varphi(md')}\\&\quad+\sum_{m\sim \mathcal{M}}f_1(m)\sum_{d'|P(z)}\lambda^+_{d'}\sum_{y^\delta<p_1,\cdots,p_j\leq y}\sum_{\substack{x^{\eta_2}/(\log x)^B<d<x^{\eta_1}\\p_1\cdots p_j|d\\(d(p_1\cdots p_j)^{-1}, P(y^\delta,y))=1\\(d,md')=1}}\Bigg(\sum_{\substack{p\leq x/d\\dp\equiv 1\mkern-15mu\pmod{md'}}}1-\frac{\pi(x/d)}{\varphi(md')}\Bigg)\\&=\sum_{m\sim \mathcal{M}}f_1(m)\sum_{d'|P(z)}\lambda^+_{d'}\sum_{y^\delta<p_1,\cdots,p_j\leq y}\sum_{\substack{x^{\eta_2}/(\log x)^B<d<x^{\eta_1}\\p_1\cdots p_j|d\\(d(p_1\cdots p_j)^{-1}, P(y^\delta,y))=1\\(d,md')=1}}\frac{\pi(x/d)}{\varphi(md')}+O\left(\frac{x}{(\log x)^A}\right).
	\end{align*}
	For the main term, by Lmma 2.6, we have
	\begin{align*}
		&\sum_{d'|P(z)}\lambda^+_{d'}\sum_{y^\delta<p_1,\cdots,p_j\leq y}\sum_{\substack{x^{\eta_2}/(\log x)^B<d<x^{\eta_1}\\p_1\cdots p_j|d\\(d(p_1\cdots p_j)^{-1}, P(y^\delta,y))=1\\(d,md')=1}}\frac{\pi(x/d)}{\varphi(md')}\\&=\sum_{y^\delta<p_1,\cdots,p_j\leq y}\sum_{\substack{x^{\eta_2}/(\log x)^B<d<x^{\eta_1}\\p_1\cdots p_j|d\\(d(p_1\cdots p_j)^{-1}, P(y^\delta,y))=1\\(d,m)=1}}\pi(x/d)\sum_{d'|P'(z)}\frac{\lambda_{d'}^+}{\varphi(md')}\\&\leq (F(2)+o(1))\sum_{y^\delta<p_1,\cdots,p_j\leq y}\sum_{\substack{x^{\eta_2}/(\log x)^B<d<x^{\eta_1}\\p_1\cdots p_j|d\\(d(p_1\cdots p_j)^{-1}, P(y^\delta,y))=1\\(d,m)=1}}\frac{\pi(x/d)}{\varphi(m)}\\&\quad\times\prod_{2<p\leq z,(p,dm)=1}\left(\frac{p-2}{p-1}\right)\prod_{2<p\leq z,p|m}\left(\frac{p-1}{p}\right),
	\end{align*}
	where $P'(z)=\prod_{p\leq z,(p,d)=1}p$. 
	Summing over all $1\leq j\leq J$, we have
	\begin{align*}
		&\sum_{1\leq j\leq J}\sum_{y^\delta<p_1,\cdots,p_j\leq y}\sum_{\substack{x^{\eta_2}/(\log x)^B<d<x^{\eta_1}\\p_1\cdots p_j|d\\(d(p_1\cdots p_j)^{-1}, P(y^\delta,y))=1\\(d,m)=1}}\frac{\pi(x/d)}{\varphi(m)}\prod_{2<p\leq z,(p,dm)=1}\left(\frac{p-2}{p-1}\right)\prod_{2<p\leq z,p|m}\left(\frac{p-1}{p}\right)\\&\leq \sum_{\substack{x^{\eta_2}/(\log x)^B<d<x^{\eta_1}\\(d,m)=1}}\frac{\pi(x/d)}{m}\prod_{2<p\leq z,(p,dm)=1}\left(\frac{p-2}{p-1}\right).\tag{4.4}
	\end{align*}
	We may then follow the proof in [32], merely replacing \(4/7\) (or \(1/2\)) with \(5/8\).
	
	We now need to show that the contributions from the remaining terms are small.
	For a suitable absolute constant $c_1$, omitting the details, we have
	\begin{align*}
		&\sum_{m\sim \mathcal{M}}f_1(m)\sum_{y^\delta<p_1,\cdots,p_{J+1}\leq y}\sum_{\substack{x^{\eta_2}/(\log x)^B<d<x^{\eta_1}\\p_1\cdots p_{J+1}|d}}\sum_{\substack{p\leq x/d}}\sum_{d'|((dp-1)/m,P(z))}\lambda^+_{d'}\\&=\sum_{m\sim \mathcal{M}}f_1(m)\sum_{d'|P(z)}\lambda^+_{d'}\sum_{y^\delta<p_1,\cdots,p_{J+1}\leq y}\sum_{\substack{x^{\eta_2}/(\log x)^B<d<x^{\eta_1}\\p_1\cdots p_{J+1}|d\\(d,md')=1}}\frac{\pi(x/d)}{\varphi(md')}+O\left(\frac{x}{(\log x)^A}\right)\\&= \sum_{m\sim \mathcal{M}}f_1(m)\sum_{y^\delta<p_1,\cdots,p_{J+1}\leq y}\sum_{\substack{x^{\eta_2}/(\log x)^B<d<x^{\eta_1}\\p_1\cdots p_{J+1}|d\\(d,m)=1}}\pi(x/d)\sum_{d'|P'(z)}\frac{\lambda_{d'}^+}{\varphi(md')}+O\left(\frac{x}{(\log x)^A}\right)\\&\leq (e^{\gamma}+o(1))\sum_{m\sim \mathcal{M}}\frac{f_1(m)}{m}\sum_{y^\delta<p_1,\cdots,p_{J+1}\leq y}\sum_{\substack{x^{\eta_2}/(\log x)^B<d<x^{\eta_1}\\p_1\cdots p_{J+1}|d\\(d,m)=1}}\pi(x/d)\prod_{2<p\leq z,(p,dm)=1}\left(\frac{p-2}{p-1}\right)\\&\quad+O\left(\frac{x}{(\log x)^A}\right)\\&\leq (2A_0+o(1))\frac{x}{\log D}\sum_{m\sim \mathcal{M}}\frac{f_1(m)H(m)}{m}\sum_{y^\delta<p_1,\cdots,p_{J+1}\leq y}\sum_{\substack{x^{\eta_2}/(\log x)^B<d<x^{\eta_1}\\p_1\cdots p_{J+1}|d}}\frac{H(d)}{d\log(x/d)}\\&\quad+O\left(\frac{x}{(\log x)^A}\right)\\&\leq \left(c_1\left(\log\frac{1}{\delta}\right)^{J+1}\log\frac{1-\eta_2}{1-\eta_1}+o(1)\right)\frac{x}{\log x}.\tag{4.5}
	\end{align*}
	since
	\begin{align*}
		\sum_{\substack{x^{\eta_2}/(\log x)^B<d<x^{\eta_1}\\p_1\cdots p_{J+1}|d}}\frac{H(d)}{d\log(x/d)}&=\frac{1+o(1)}{p_1\cdots p_{J+1}}\sum_{\substack{x^{\eta_2}/(\log x)^{B}<p_1\cdots p_{J+1}d<x^{\eta_1}}}\frac{H(d)}{d\log(x/(p_1\cdots p_Jd))}\\&=\frac{A_0^{-1}+o(1)}{p_1\cdots p_{J+1}}\log\frac{1-\eta_2}{1-\eta_1}.
	\end{align*}
	We also need to estimate the terms $(d,P(y^\delta,y))=1$. We are unable to use Lemma 3.2 directly here. By Lemma 2.7, set \(D=z^2=x^{1/2}(\log x)^{-B}(2\mathcal{M})^{-1}\). For a suitable absolute constant $c_2$, we have
	\begin{align*}
		&\sum_{m\sim \mathcal{M}}f_1(m)\sum_{\substack{x^{\eta_2}/(\log x)^B<d<x^{\eta_1}\\(d,P(y^\delta,y))=1}}\sum_{\substack{p\leq x/d}}\sum_{d'|((dp-1)/m,P(z))}\lambda^+_{d'}\\&\leq (2A_0+o(1))\frac{x}{\log D}\sum_{m\sim \mathcal{M}}\frac{f_1(m)H(m)}{m}\sum_{\substack{x^{\eta_2}/(\log x)^B<d<x^{\eta_1}\\(d,P(y^\delta,y))=1}}\frac{H(d)}{d\log(x/d)}+O\left(\frac{x}{(\log x)^A}\right)\\&\leq (c_2+o(1))\frac{x}{\log x}\sum_{\substack{x^{\eta_2}/(\log x)^B<d<x^{\eta_1}\\(d,P(y^\delta,y))=1}}\frac{H(d)}{d\log(x/d)}+O\left(\frac{x}{(\log x)^A}\right).\tag{4.6}
	\end{align*}
	Write $d=d_1d_2$ with $P^+(d_1)\leq y^{\delta}$ and $P^-(d_2)>y$. Putting $d_2$ in dynadic range $d_2\sim D_2$ with $y<D_2<x^{\eta_1}$, for a suitable absolute constant $c_3$, we have 
	\begin{align*}
		\sum_{\substack{x^{\eta_2}/(\log x)^B<d<x^{\eta_1}\\(d,P(y^\delta,y))=1}}&\frac{H(d)}{d\log(x/d)}\leq \sum_{\substack{x^{\eta_2}/(\log x)^B<d<x^{\eta_1}\\P^+(d)\leq y^\delta}}\frac{H(d)}{d\log(x/d)}\\&+\sum_{D_2}\sum_{\substack{d_2\sim D_2\\P^-(d_2)>y}}\frac{1}{D_2}\sum_{\substack{x^{\eta_2}/(d_2(\log x)^B)<d_1<x^{\eta_1}/d_2\\P^+(d_1)\leq y^{\delta}}}\frac{H(d_1)}{d_1\log(x/(d_1d_2))}\\&\leq \left(c_3\delta+o(1)\right)\log\frac{1-\eta_2}{1-\eta_1},
	\end{align*}
	since
	\begin{align*}
		&\sum_{D_2}\sum_{\substack{d_2\sim D_2\\P^-(d_2)>y}}\frac{1}{D_2}\sum_{\substack{x^{\eta_2}/(d_2(\log x)^B)<d_1<x^{\eta_1}/d_2\\P^+(d_1)\leq y^{\delta}}}\frac{H(d_1)}{d_1\log(x/(d_1d_2))}\\&\leq \sum_{y<D_2<x^{\eta_2}/(\log x)^B}\frac{1}{\log y}\Bigg(A_0^{-1}\rho\left(\frac{\eta_2\log x-\log D_2}{\delta\log y}\right)+o(1)\Bigg)\log\frac{1-\eta_2}{1-\eta_1}\\&\quad +\frac{1}{(1-\eta_1)\log x}\sum_{x^{\eta_2}/(\log x)^B\leq D_2<x^{\eta_1}}\frac{1}{\log y}\sum_{\substack{1<d_1<x^{\eta_1}/d_2\\P^+(d_1)\leq y^{\delta}}}\frac{H(d_1)}{d_1}\\&\leq \Bigg(3A_0^{-1}\delta\int_0^{\infty}\rho(u)\mathrm{d}u+o(1)\Bigg)\log\frac{1-\eta_2}{1-\eta_1}.
	\end{align*}
	To summarize all the above, for sufficiently small $\delta>0$, we obtain
	\begin{align*}
		S\leq \Bigg(\left(2\int_{\eta_1-t_1-t_2}^{\eta_1}\frac{1-(1-t)/(1-\eta_1+t_1+t_2)}{4/7-\varepsilon-t}\mathrm{d}t+10^{-6}\right)\log\frac{1-\eta_2}{1-\eta_1}+o(1)\Bigg)x\log x.\tag{4.7}
		\end{align*}
	
	Similarly, for the $\mathscr{S}_B$ in [32, Lemma 4.1], we can use $5/8$ instead of $4/7$ to get \begin{align*}
		\mathscr{S}_B= \sum_{\substack{n<x\\P^+(n)\geq P^+(n+1)>x^{1-c}}}1&\leq \left(2\int_0^c\int_{0}^{\eta}\frac{1}{(5/8-t)(1-\eta)}\mathrm{d}t\mathrm{d}\eta+\nu+o(1)\right)x\\&= \left(2\int_0^c\log\left(\frac{1-t}{1-c}\right)\frac{1}{5/8-t}\mathrm{d}t+\nu+o(1)\right)x
	\end{align*}
	We also can use $5/8$ instead of $1/2$ in [32, after (4.19)] and get
	\begin{align*}
		\mathcal{D}\leq \Bigg(2\int_0^{1-s}\frac{(1-t)/s-1}{5/8-t}\mathrm{d}t+10^{-6}+o(1)\Bigg)g_2(\alpha,\beta)x
	\end{align*}
	and
	\begin{align*}
		B_2&\geq \Bigg(\frac{1}{2}\left(\frac{1}{s}-1\right)-2\int_0^{1-s}\frac{(1-t)/s-1}{5/8-t}\mathrm{d}t-10^{-6}+o(1)\Bigg)g_2(\alpha,\beta)x\\&\geq (0.042+o(1))g_2(\alpha,\beta)x,
	\end{align*}
	by taking $s=0.852$. Then we have
	\begin{align*}
		\mathcal{R}\leq\Bigg(0.229\left(\log\frac{1}{2\delta_1}\right)^2+o(1)\Bigg)x,\quad
		\mathscr{S}'_5\leq \left(0.458\int_{\delta_1}^{1/2}\frac{\log{(2-2t)}}{t}\mathrm{d}t+o(1)\right)x.\tag{4.8}
	\end{align*}
	
	\subsection{Case 2: $3/8<\eta_1<1/2$}
	In this case, we obtain a better level than $x^{5/8-\varepsilon}$ to estimate $S$. Without loss of generality, we assume $3/8<\eta_2<\eta_1$. (We can assume $|\eta_1-\eta_2|\leq 10^{-6}$.)
	We consider the following type of estimation
	\begin{align*}
		\sum_{m\sim \mathcal{M}}\sum_{d'\leq D}f_1(m)\lambda_{d'}\Bigg(\sum_{\substack{d\sim x^{\eta}}}\sum_{\substack{p<x/d\\dp\equiv a\mkern-15mu\pmod{md'}}}1-\frac{1}{\varphi(md')}\sum_{\substack{d\sim x^{\eta}}}\sum_{\substack{p<x/d\\(dp,md')=1}}1\Bigg)\ll\frac{x}{(\log x)^A},
	\end{align*}
	where $2\mathcal{M}<x^{\eta-\varepsilon}$,  $\eta_2<\eta<\eta_1$, $\lambda$ is a well-factorable function of level $D=x^{1/4+\eta-\varepsilon}(2\mathcal{M})^{-1}$. (We ignore the terms $x^{\eta-\varepsilon}/2\leq \mathcal{M}<x^{\eta_1}$, since their contributions are small.)
	In fact, we only need to prove that for $Q\leq Mx^{-\varepsilon}$ and $R<x^{1/4-\varepsilon}$, for any $A>1$
	\begin{align*}
		\sum_{q\sim Q}\sum_{r\sim R}\gamma_q\delta_r\Bigg(\sum_{\substack{m\sim M}}\sum_{\substack{n\sim N\\mn\equiv a\mkern-15mu\pmod{qr}}}\beta_n-\frac{1}{\varphi(qr)}\sum_{\substack{m\sim M}}\sum_{\substack{n\sim N\\(mn,qr)=1}}\beta_n\Bigg)\ll\frac{x}{(\log x)^A},\tag{4.9}
	\end{align*}
	where $(\gamma_q),(\delta_r), (\beta_n)$ are $1$-bound sequences.
	Note that for a well-factorable function $\lambda_{d'}=\lambda(d')$, we can write
	\begin{align*}
		\nu*\lambda=\nu*\lambda_1*\lambda_2=(\nu*\lambda_1)*\lambda_2,
	\end{align*}
	where $\nu=1_{(\mathcal{M},2\mathcal{M}]}f_1$, $\lambda=\lambda_1*\lambda_2$, $\lambda_1(d_1)$ is supported on $d_1\leq x^{\eta-\varepsilon}(2\mathcal{M})^{-1}$, $\lambda_2(d_2)$ is supported on $d_2\leq x^{1/4-\varepsilon}$.

	Now we begin to prove (4.9). Following Section 12 in [2], we apply Lemma 2.2 instead of [2, Lemma 2] to get
	\begin{align*}
		\mathscr{D}(M,N,Q,R)\ll &x^\varepsilon\frac{M}{Q}\Bigg|\mathop{\sum_{q\sim Q}\sum_{r\sim R}}_{(qr,a)=1}\frac{\gamma_q\delta_r}{r}\sum_{\substack{n\sim N\\(n,qr)=1}}\beta_n\int\Phi\left(\frac{uqr}{Qr}\right)\sum_{h}\Psi\left(\frac{h}{H}\right)\\&\times e\left(-h\frac{uM}{Qr}\right)e\left(-ah\frac{\overline{n}}{qr}\right)\mathrm{d}u\Bigg|+O(N^{1/2}M^{1/2}x^{1/2-\varepsilon/2}),
	\end{align*}
	where $H\ll x^\varepsilon QRM^{-1}$,  $\Phi$ and $\Psi$ are some compactly supported functions with $\Phi^{(k)}\ll_kx^{k\varepsilon}$ (this has no effect) and $\Psi^{(k)}\ll_k1$ for $k\geq 0$. 
	Our aim is to prove $\mathscr{D}(M,N,Q,R)\ll x^{1-\varepsilon}$.
	The contribution of the term $O(\cdot)$ is acceptable. Then we have (compared with [2, before (12.2)])
	\begin{align*}
		\mathscr{D}(M,N,Q,R)\ll&x^{\varepsilon}\frac{M}{QR}\max_{u\asymp1}\sum_{q\sim Q}|\gamma_q|\sum_{\substack{n\sim N\\(n,q)=1}}|\beta(n)|\\&\times\Bigg|\sum_{h}\Psi\left(\frac{h}{H}\right)\sum_{r\sim R,(r,an)=1}\delta'_re\left(-h\frac{uM}{Qr}\right)e\left(-ah\frac{\overline{n}}{qr}\right)\Bigg|+x^{1-\varepsilon/2},
	\end{align*}
	where $|\delta_r'|\leq1$.
	Noting that 
	\begin{align*}
		\frac{\overline{n}}{qr}\equiv \frac{\overline{qr}}{n}+\frac{1}{nqr}\mkern-7mu\pmod{1},
	\end{align*}
	we have 
	\begin{align*}
		e\left(-ah\frac{\overline{n}}{qr}\right)=e\left(-ah\frac{\overline{qr}}{n}\right)+O\left(\frac{|aH|}{NQR}\right).
	\end{align*}
	The contribution of the term $O(|aH|/(NQR))$ is
	\begin{align*}
		\ll \frac{M}{QR}QNHR\frac{H}{NQR}\ll x^\varepsilon\frac{QR}{M},
	\end{align*}
	which is accpetable.
	Then we have
	\begin{align*}
		\mathscr{D}(M,N,Q,R)\ll&x^\varepsilon\frac{M}{QR}\max_{u\asymp1}\sum_{q\sim Q}\sum_{\substack{n\sim N\\(n,q)=1}}\\&\times\Bigg|\sum_{h}\Psi\left(\frac{h}{H}\right)\sum_{r\sim R,(r,an)=1}\delta'_re\left(-h\frac{uM}{Qr}\right)e\left(-ah\frac{\overline{qr}}{n}\right)\Bigg|+x^{1-\varepsilon/2}.
	\end{align*}
	By Cauchy's inequity, we have
	\begin{align*}
		\mathscr{D}(M,N,Q,R)\ll&x^\varepsilon MN^{1/2}Q^{-1/2}R^{-1}\mathscr{A}^{1/2}+x^{1-\varepsilon/2},\tag{4.10}
	\end{align*}
	where
	\begin{align*}
		\mathscr{A}=\sum_{q\sim Q}\sum_{\substack{n\sim N\\(n,q)=1}}\Bigg|\sum_{h}\Psi\left(\frac{h}{H}\right)\sum_{r\sim R,(r,an)=1}\delta'_re\left(-h\frac{uM}{Qr}\right)e\left(-ah\frac{\overline{qr}}{n}\right)\Bigg|^2.
	\end{align*}
	Then we appeal to Lemma 2.5 to infer the following lemma.
	\begin{lemma}
		Let $1\leq C,D,H,K,Q\ll x^{O(1)}$, $a\neq 0$ and $\varepsilon>0$. For $q\sim Q$ and $\omega_q\in\mathbb{R}/\mathbb{Z}$,
		define
		\begin{align*}
			\mathscr{B}(C,D,H,Q):=\sum_{c\sim C}\sum_{d\sim D}\Bigg|\sum_h\Psi\left(\frac{h}{H}\right)\sum_{q\sim Q}\beta_q e\left(h\omega_q\right)e\left(ah\frac{\overline{dq}}{c}\right)\Bigg|^2.
		\end{align*}
		Then we have \begin{align*}
			&\mathscr{B}(C,D,H,Q)\\&\ll x^\varepsilon CDHQ+ x^\varepsilon HQ^{1/2}(H+Q)^{1/2}\\&\quad\times\Bigg(D^2HQ^3+\left(1+\frac{C^2(H+Q)\max T_H(\omega_q)}{H^2Q^4}\right)^{7/32}C(C+DQ^2)(Q^2+HQ)\Bigg)^{1/2}.
		\end{align*}
	\end{lemma}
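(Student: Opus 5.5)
This is the analogue of [20, Lemma/Corollary for incomplete Kloosterman sums] and of [2, Lemma 12], and I would prove it by the standard "open the square and dualize" route, which reduces $\mathscr{B}$ to a trilinear Kloosterman form covered by Lemma 2.5, with the needed exceptional-spectrum input supplied by Lemma 2.4. \textbf{Step 1.} Expand the square:
\begin{align*}
\mathscr{B}=\sum_{h_1,h_2}\sum_{q_1,q_2}\Psi\left(\frac{h_1}{H}\right)\overline{\Psi}\left(\frac{h_2}{H}\right)\beta_{q_1}\overline{\beta_{q_2}}\,e(h_1\omega_{q_1}-h_2\omega_{q_2})\sum_{c\sim C}\sum_{d\sim D}e\left(a\frac{(h_1\overline{q_1}-h_2\overline{q_2})\overline{d}}{c}\right).
\end{align*}
The diagonal terms $h_1q_2=h_2q_1$ contribute $\ll x^\varepsilon CDHQ$ by the divisor bound, which is the first term of the claimed bound. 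For the off-diagonal terms, I would write $h_1\overline{q_1}-h_2\overline{q_2}\equiv (h_1q_2-h_2q_1)\overline{q_1q_2}\pmod c$. Then I would set $r\leftarrow q_1q_2$ up to gcd reductions; more precisely I would take $r=\ell_1\ell_2$ after extracting $(q_1,q_2)$, which costs $x^\varepsilon$. I would also set $n\leftarrow a(h_1q_2-h_2q_1)$, $s\leftarrow 1$, and $C\leftarrow C$, $D\leftarrow D$.

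\textbf{Step 2.} The resulting sum has the shape $\sum_{r\sim R}w_r\sum_{n\sim N}a_{n,r}\sum_{c,d}\Phi(\cdot)e(n\overline{rd}/c)$. Here $R\asymp Q^2$, $N\ll |a|HQ$, and
\begin{align*}
a_n=\sum_{a(h_1\ell_1-h_2\ell_2)=n}\Psi\left(\frac{h_1}{H}\right)\overline{\Psi}\left(\frac{h_2}{H}\right)e(h_1\alpha_1+h_2\alpha_2),
\end{align*}
with $\alpha_i=\pm\omega_{q_i}$. This is exactly the sequence of Lemma 2.4 with $L\asymp Q$. Lemma 2.4 therefore supplies Assumption 2.3 with $A\ll x^\varepsilon\bigl(\|a_n\|_2+\sqrt{N(H/Q+H^2/Q^2)}\bigr)$ and $Y\gg NH/((H+Q)Q\max T_H)$. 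Here $\|a_n\|_2^2\ll x^\varepsilon H(H+Q)$ by counting representations. The smooth cutoffs in $c,d$ and the dyadic decomposition of $n$ are inserted with $x^\varepsilon$ losses. The coprimality condition $q\gg L^2$ in Lemma 2.4 is not met directly, so I would first take the modulus $rs$ large using $c$. Failing that, I would apply Lemma 2.5 with the roles chosen as in [20, Corollary 5.14]'s intended application, namely $R\to Q^2$ and $S\to 1$.

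\textbf{Step 3.} Insert into Lemma 2.5: $\|w_rA_r\|_2\ll x^\varepsilon Q\cdot (H(H+Q))^{1/2}$ and
\begin{align*}
\mathscr{I}^2=D^2NR+\left(1+\frac{C^2}{R^2Y}\right)^{2\theta_{\max}}C(C+DR)(R+N),
\end{align*}
with $N\asymp HQ$ and $R\asymp Q^2$. This gives $D^2HQ^3$ and $C(C+DQ^2)(Q^2+HQ)$. For the exponent, $C^2/(R^2Y)\asymp C^2(H+Q)Q\max T_H/(Q^4\cdot HQ\cdot H)$, and $2\theta_{\max}=7/32$. This matches the claimed bound up to harmless powers of $Q$ absorbed into $x^\varepsilon$, or into the stated form $C^2(H+Q)\max T_H/(H^2Q^4)$. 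Multiplying $\|wA\|_2$ by $\mathscr{I}$ yields the second term $HQ^{1/2}(H+Q)^{1/2}(\cdots)^{1/2}$.

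\textbf{Main obstacle.} The hardest part is the bookkeeping in Steps 1--2. The gcd $(q_1,q_2)$ and the coprimality $(\ell_1,\ell_2)=1$ must be arranged, and so must the conditions $(rd,c)=1$. Above all, the hypothesis $q\gg L^2$ in Lemma 2.4 must be verified for the level at which the exceptional large sieve is applied. I would handle this as Pascadi does, by applying the assumption at level $c$ (or $rs$) where needed. In the degenerate range I would fall back on the trivial bound $\theta=0$ (Weil/Deshouillers–Iwaniec) in place of $7/64$.
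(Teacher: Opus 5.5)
Your architecture is the paper's: open the square, bound the terms with $a(h_1q_2-h_2q_1)=0$ by $x^\varepsilon CDHQ$, extract $q_0=(q_1,q_2)$, obtain Assumption 2.3 from Lemma 2.4, and apply Lemma 2.5 with $S=1$. Your computation of $C^2/(R^2Y)$ is in fact exact; no powers of $Q$ need absorbing, and they could not be absorbed into $x^\varepsilon$ anyway.

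The quantitative core, however, is wrong, namely the size of $\|w_rA_r\|_2$, and the final ``matching'' does not follow from what you wrote. For one modulus with $\ell_1,\ell_2\asymp L$ coprime, $\sum_n|a_n|^2$ is $\ll$ the number of $h_1,h_2,h_1',h_2'\asymp H$ with $\ell_2(h_1-h_1')=\ell_1(h_2-h_2')$. This forces $h_1'\equiv h_1\pmod{\ell_1}$ and gives $\ll H^2(1+H/L)$, not $H(H+Q)$. Moreover, the second summand of $A$ in Lemma 2.4 is by definition $\sqrt{N(H/L+H^2/L^2)}\asymp(H^2+H^3/Q)^{1/2}$ when $N\asymp HQ$ and $L\asymp Q$. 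Hence $A_r^2\ll H^2(1+H/Q)$. Summing over the $\asymp Q^2$ moduli gives $\|wA\|_2^2\ll H^2Q(H+Q)$, which is the paper's $E(H^2Q_0+HQ)$ at $Q_0=1$. This is precisely what produces the prefactor $HQ^{1/2}(H+Q)^{1/2}$. Your $A_r^2\ll H(H+Q)$ drops the $H^3/Q$ term built into $A$, so it is false for $H>Q$, and for $H<Q$ it is too large by a factor $Q/H$. Multiplying your $\|wA\|_2\ll Q\,(H(H+Q))^{1/2}$ by $\mathscr{I}$ gives $QH^{1/2}(H+Q)^{1/2}(\cdots)^{1/2}$, not the stated second term.

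The gcd step is also misdescribed. With $q_i=q_0\ell_i$ the phase becomes $e\bigl(a(h_1\ell_2-h_2\ell_1)\overline{dq_0\ell_1\ell_2}/c\bigr)$. So the level is $g=q_0\ell_1\ell_2\asymp Q^2/Q_0$, not $\ell_1\ell_2$, with $N=E\ll|a|HQ/Q_0$ and $L=Q/Q_0$. This does not merely ``cost $x^\varepsilon$'': $\|wA\|_2$, $\mathscr{I}$ and $Y$ all depend on $Q_0$. You must check that the resulting bound is non-increasing in $Q_0$ before specializing to $Q_0=1$, as the paper does. Only the choice of a single coefficient sequence per $g$, via a maximum over the factorizations $g=q_0\ell_1\ell_2$, costs a divisor-type factor.

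Finally, what you call the main obstacle is not one. The hypothesis $q\gg L^2$ in Lemma 2.4 is a size condition, not a coprimality condition, and it holds automatically since $g\ge\ell_1\ell_2\asymp L^2$. Your proposed workarounds would not be meaningful anyway. The level in Lemma 2.5 is $rs$, and $c$ cannot be moved into it. Also, $\theta=0$ is Selberg's eigenvalue conjecture, the strongest possible input rather than a fallback.
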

	\begin{proof}
		Squaring and changing the order of summation we represent $\mathscr{B}(C, D, H, Q)$ as 
		\begin{align*}
			\mathscr{B}(C, D, H, Q)\leq &\sum_{c}\sum_{d}\Phi_0\left(\frac{c}{C},\frac{d}{D}\right)\sum_{h_1,h_2}\Psi\left(\frac{h_1}{H}\right)\overline{\Psi\left(\frac{h_2}{H}\right)}\sum_{q_1,q_2\sim Q}\beta_{q_1}\overline{\beta_{q_2}} \\&\times e\left(h_1\omega_{q_1}-h_2\omega_{q_2}\right) e\left(a(h_1q_2-h_2q_1)\frac{\overline{dq_1q_2}}{c}\right).
		\end{align*}
		Let $e=a(h_1q_2-h_2q_1)$. The contribution of $e=0$ is 
		\begin{align*}
			\mathscr{B}(e= 0)&\ll x^\varepsilon\sum_{c\sim C}\sum_{d\sim D}\sum_{h_1\asymp H}\sum_{q_2\sim Q}1\\&\ll x^\varepsilon CDHQ.\tag{4.11}
		\end{align*}
		Now consider $e\neq 0$. Without loss of generality, we assume that \(e>0\). Let $q_0=(q_1,q_2)$. Setting $q_1\leftarrow q_0q_1$, $q_2\leftarrow q_0q_2$, $e\leftarrow q_0e$, we have 
		\begin{align*}
			&\mathscr{B}(e\neq 0)\\&\ll x^\varepsilon K\max_{\substack{Q_0\ll Q}}\max_{\substack{G\asymp Q^2Q_0^{-1}\\E\ll|a|HQQ_0^{-1}}}\sum_{g\sim G}\max_{\substack{q_0q_1q_2=g\\q_1,q_2\sim Q/q_0\\(q_1,q_2)=1}}\Bigg|\sum_{e\sim E}a_{e,g}\sum_{c}\sum_{d}\Phi_0\left(\frac{c}{C},\frac{d}{D}\right)e\left(e\frac{\overline{dg}}{c}\right)\Bigg|,
		\end{align*}
		where \begin{align*}
			a_{e,g}=\sum_{\substack{a(h_1q_2-h_2q_1)=e}}\Psi\left(\frac{h_1}{H}\right)\overline{\Psi\left(\frac{h_2}{H}\right)}e\left(h_1\omega_{q_0q_1}-h_2\omega_{q_0q_2}\right),\quad w_g=1
		\end{align*}
		if the maximum is attained at some $q_1=q_1(g)$, $q_2=q_2(g)$; if the maximum is empty, we let $a_{e,g}=0$ and $w_g=0$. By Lemma 2.4, we get that the tuple $(g,E, x, (a_{e,g})_{e\sim E}, A_g, Y)$ satisfies Assumption 2.3, where
		\begin{align*}
			A_g&=\|(a_{e,g})_{e\sim E}\|_2+\sqrt{E\left(\frac{HQ_0}{Q}+\frac{H^2Q_0^2}{Q^2}\right)},\\ Y&=\frac{EHQ_0}{|a|(H+QQ_0^{-1})Q\max T_H(\omega_{q})}
		\end{align*}
		By Lemma 2.5 and (4.11), we get
		\begin{align*}
			\mathscr{B}(C,D&,H,Q)\ll x^\varepsilon CDHQ\\&+\max_{\substack{Q_0,E,G}} x^\varepsilon \|w_gA_g\|_2\Bigg(D^2EG+\left(1+\frac{C^2}{G^2Y}\right)^{2\theta_{\max}}C(C+DG)(G+E)\Bigg)^{1/2}.
		\end{align*}
		For $\|w_gA_g\|_2$, we have \begin{align*}
			&\|w_gA_g\|_2^2\\&\ll\sum_{e\sim E}\sum_{g\sim G}|a_{e,g}|^2+\sum_{g\sim G}E\left(\frac{HQ_0}{Q}+\frac{H^2Q_0^2}{Q^2}\right)
			\\&\ll \sum_{q_0\sim Q_0}\sum_{\substack{q_1,q_2\sim Q/q_0\\(q_1,q_2)=1}}\sum_{\substack{e\sim E\\a|e}}\Bigg(\sum_{\substack{h_1,h_2\asymp H\\h_1q_2-h_2q_1=e/a}}1\Bigg)^2+EG\left(\frac{HQ_0}{Q}+\frac{H^2Q_0^2}{Q^2}\right)\\&\ll \sum_{q_0\sim Q_0}\sum_{e\sim E,a|e}\sum_{\substack{q_1,q_2\sim Q/q_0\\(q_1,q_2)=1}}\sum_{\substack{h_1,h_2\asymp H\\h_1q_2-h_2q_1=e/a}}\sum_{\substack{h_1',h_2'\asymp H\\q_1(h_2-h_2')=q_2(h_1-h_1')}}1+EG\left(\frac{HQ_0}{Q}+\frac{H^2Q_0^2}{Q^2}\right)\\&\ll \sum_{q_0\sim Q_0}\sum_{e\sim E,a|e}
			\sum_{\substack{q_1\sim Q/q_0\\h_2\asymp H}}\sum_{\substack{q_2\sim Q/q_0\\h_1\asymp H\\h_1q_2=h_2q_1+e/a}}\sum_{\substack{h_1'\asymp H\\h_1'\equiv h_1\mkern-15mu\pmod{q_1}}}1+EG\left(\frac{HQ_0}{Q}+\frac{H^2Q_0^2}{Q^2}\right)
			\\&\ll E(H^2Q_0+HQ).
		\end{align*}
		Thus we get
		\begin{align*}
			&\mathscr{B}(C,D,H,Q)\\&\ll x^\varepsilon CDHQ+x^\varepsilon \max_{\substack{Q_0\ll Q\\E\ll |a|HQQ_0^{-1}\\G\asymp Q^2Q_0^{-1}}}(E(H^2Q_0+HQ))^{1/2}\\&\quad\times\Bigg(D^2EG+\left(1+\frac{C^2}{G^2Y}\right)^{7/32}C(C+DG)(G+E)\Bigg)^{1/2}\\&\ll x^\varepsilon CDHQ+\max_{Q_0\ll Q}x^\varepsilon HQ^{1/2}Q_0^{-1/2}(HQ_0+Q)^{1/2}\Bigg(D^2HQ^3Q_0^{-2}\\&\quad+\left(1+\frac{C^2(HQ_0+Q)\max T_H(\omega_q)}{H^2Q^4}\right)^{7/32}C(C+DQ^2Q_0^{-1})(Q^2+HQ)Q_0^{-1}\Bigg)^{1/2}.
		\end{align*}
		This bound is seen to be non-increasing in the $Q_0\gg 1$ parameter. Taking $Q_0=1$, we complete the proof.
	\end{proof}

	Let $Q\leq Mx^{-\varepsilon}$.
	We apply Lemma 4.1 to bound $\mathscr{A}$ and get
	\begin{align*}
		\mathscr{A}&\ll \mathscr{B}(N,Q,H,R)
		\\&\ll x^\varepsilon HR\Bigg(Q^2HR^3+\left(1+\frac{N^2}{H^2R^3}\right)^{7/32}N(N+QR^2)R^2\Bigg)^{1/2}+x^\varepsilon NQHR\\&\ll x^\varepsilon\frac{QR^2}{M}\Bigg(\frac{Q^3R^4}{M}+\left(1+\frac{x^2}{Q^2R^5}\right)^{7/32}N(N+QR^2)R^2\Bigg)^{1/2}+x^\varepsilon\frac{NQ^2R^2}{M}.
	\end{align*}
	Consider $QR>x^{1/2-\varepsilon}$. (Otherwise we obtain the required result from the generalized Bombieri-Vinogradov theorem.) Returning to (4.10), provided
	\begin{align*}
		M^2R^4<x^{2-\varepsilon},\quad R^4<x^{1-\varepsilon},\quad R<Mx^{-\varepsilon},
	\end{align*}
	\begin{align*}
		R^{43/32}<x^{-7/32-\varepsilon}M^2,\quad R^{93/32}<x^{9/16}M^{7/16}x^{-\varepsilon}, 
	\end{align*}
	the above bound satisfies the requirements.
	In fact, for $M>x^{3/8}$, the conditions $R<Mx^{-\varepsilon}$, $R^{43/32}<x^{-7/32-\varepsilon}M^2$ and $R^{93/32}<x^{9/16}M^{7/16}x^{-\varepsilon}$ are implied by others. Thus we only need to provide
	\begin{align*}
		M>x^{3/8},\quad
		M^2R^4<x^{2-\varepsilon},\quad R^4<x^{1-\varepsilon}.\tag{4.12}
	\end{align*}
	Now we complete the proof of (4.9) and Theorem 1.4.
	
	Now following the work in [32, (4.10)-(4.14)], we can bound $S$ by (we use $1/4+\eta_1-10^{-6}-\varepsilon$ instead of $1/2$)
	\begin{align*}
		S\leq \Bigg(2\int_{\eta_1-t_1-t_2}^{\eta_1}\frac{1-(1-t)/(1-\eta_1+t_1+t_2)}{1/4+\eta_1-10^{-6}-\varepsilon-t}\mathrm{d}t\log\frac{1-\eta_2}{1-\eta_1}+o(1)\Bigg)x\log x.\tag{4.13}
	\end{align*}
	
	\subsection{Conclusion}
	Similar to [32, (4.15)], by (4.7) and (4.13), we need to find the largest $t_1=t_1(\eta_1)\in(0,\eta_1)$ such that for some $0<t_2<\eta_1-t_1$, one has
	\begin{align*}
		&\frac{1}{2}\left(\frac{1}{1-\eta_1+t_1}-\frac{1}{1-\eta_1+t_1+t_2}\right)>g_1(\eta_1,t_1,t_2)+2\times10^{-6}.
	\end{align*}
	where
	\begin{align*}
		g_1(\eta_1,t_1,t_2)=\left\{ \begin{aligned}
			&2\int_{\eta_1-t_1-t_2}^{\eta_1}\frac{1-(1-t)/(1-\eta_1+t_1+t_2)}{5/8-t}\mathrm{d}t, \quad&& \eta_1\leq3/8,\\&2\int_{\eta_1-t_1-t_2}^{\eta_1}\frac{1-(1-t)/(1-\eta_1+t_1+t_2)}{1/4+\eta_1-10^{-6}-t}\mathrm{d}t,\quad&& 3/8<\eta_1<1/2.
		\end{aligned}\right.
	\end{align*}
	For $\eta=\eta_1$, by numerical calculation, we show the pragh of $t_1(\eta)$:
	\begin{figure}[ht]
		\centering
		\includegraphics[width=0.9\textwidth]{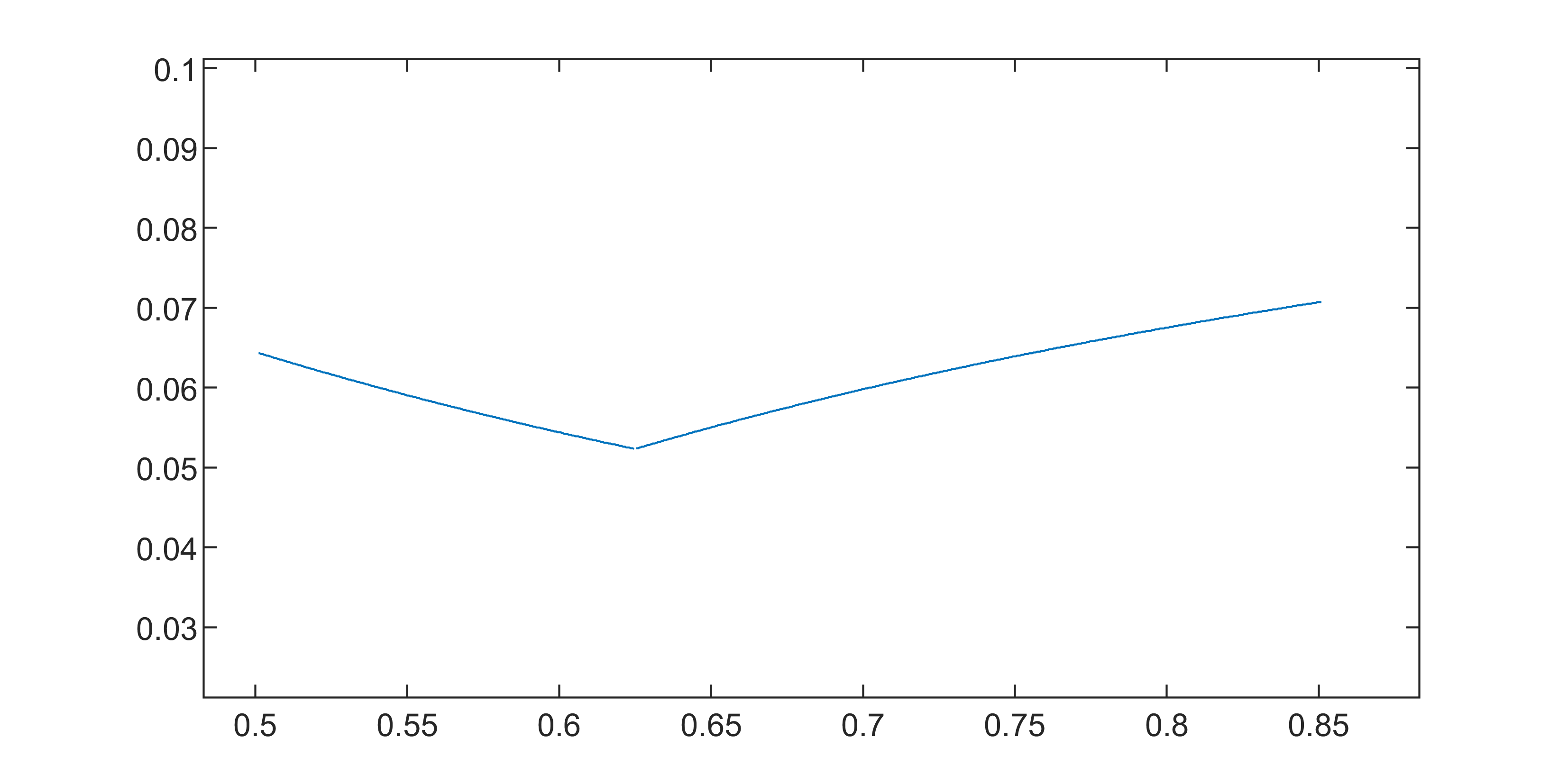} % 替换为你的图片文件名
		\caption{$\eta\rightarrow t(\eta)=t_1(1-\eta)$}
	\end{figure}

	Finally, we have (compared with [32, \S 4.4])
	\begin{align*}
		\sum_{\substack{n<x\\P^+(n)<P^+(n+1)}}1\geq (\mathcal{C}(c,\delta_1)-2\times 10^{-6}+o(1))x,
	\end{align*} 
	where $\mathcal{C}(c,\delta_1)$ is defined by
	\begin{align*}
		\mathcal{C}(c,\delta_1):=&\log\frac{1}{1-c}-2\int_0^c\log\left(\frac{1-t}{1-c}\right)\frac{\mathrm{d}t}{5/8-t}+\int_{\delta_1}^{1/2}\rho\left(\frac{1}{t}\right)\frac{1}{t}\mathrm{d}t\\&+\int_c^{1/2}\frac{1-2\eta+2t_1(\eta)}{2(1-\eta)(1-\eta+t_1(\eta))}\mathrm{d}\eta\\&-0.229\Bigg(\log\frac{1}{2\delta_1}\Bigg)^2-0.458\int_{\delta_1}^{1/2}\frac{\log{(2-2t)}}{t}\mathrm{d}t
	\end{align*}
	with $c=0.149$, $\delta_1=0.417$. By numerical calculation, we have
	\begin{align*}
		\mathcal{C} (c,\delta_1)-2\times 10^{-6}> 0.299.
	\end{align*}
	We complete the proof of Theorem 1.3.

	\section*{Acknowledegements}

\end{document}